\documentclass{amsart}
\usepackage{amsmath,amssymb,amsthm}
\usepackage{enumitem}

\usepackage[colorlinks=true,linkcolor=blue,citecolor=blue,pdfpagelabels=false]{hyperref}

\theoremstyle{plain}
  \newtheorem{theorem}{Theorem}[section]
  \newtheorem{lemma}[theorem]{Lemma}
  \newtheorem{conjecture}[theorem]{Conjecture}
  \newtheorem{corollary}[theorem]{Corollary}
  \newtheorem{proposition}[theorem]{Proposition}
  
\theoremstyle{definition}
  \newtheorem{example}[theorem]{Example}
  \newtheorem{remark}[theorem]{Remark}
  \newtheorem{question}[theorem]{Question}

\newenvironment{acknowledgements}{\bigskip\textbf{Acknowledgements.}}{}
\newenvironment{enumeratealpha}{\begin{enumerate}[label=(\alph*)]}{\end{enumerate}}

\newcommand{\dueto}[1]{\textup{\textbf{(#1) }}}
\newcommand{\equallim}{\mathop{=}\limits}

\renewcommand{\geq}{\geqslant}
\renewcommand{\leq}{\leqslant}
\newcommand{\op}[1]{\ensuremath{\operatorname{#1}}}

\renewcommand{\Im}{\op{Im}\,}

\title{On multiple and infinite log-concavity}

\author{Luis A. Medina}
\address{Department of Mathematics,
University of Puerto Rico,
Box 70377,
San Juan, PR 00936-8377,
Puerto Rico}
\email{luis.medina17@upr.edu}

\author{Armin Straub}
\address{Department of Mathematics,
University of Illinois at Urbana-Champaign,
1409 W. Green St,
Urbana, IL 61801,
United States}
\curraddr{Max-Planck-Institut f\"ur Mathematik,
Vivatsgasse 7,
53111 Bonn,
Germany}
\email{astraub@illinois.edu}

\begin{document}

\begin{abstract}
  Following Boros--Moll, a sequence $(a_n)$ is $m$-log-concave if
  $\mathcal{L}^j (a_n) \geq 0$ for all $j = 0, 1, \ldots, m$. Here,
  $\mathcal{L}$ is the operator defined by $\mathcal{L} (a_n) = a_n^2 - a_{n -
  1} a_{n + 1}$. By a criterion of Craven--Csordas and McNamara--Sagan it is
  known that a sequence is $\infty$-log-concave if it satisfies the stronger
  inequality $a_k^2 \geq r a_{k - 1} a_{k + 1}$ for large enough $r$. On
  the other hand, a recent result of Br\"and\'en shows that
  $\infty$-log-concave sequences include sequences whose generating polynomial
  has only negative real roots. In this paper, we investigate sequences which
  are fixed by a power of the operator $\mathcal{L}$ and are therefore
  $\infty$-log-concave for a very different reason. Surprisingly, we find that
  sequences fixed by the non-linear operators $\mathcal{L}$ and
  $\mathcal{L}^2$ are, in fact, characterized by a linear 4-term recurrence.
  In a final conjectural part, we observe that positive sequences appear to
  become $\infty$-log-concave if convoluted with themselves a finite number of
  times.
\end{abstract}

\subjclass[2010]{Primary 05A20, 39B12}

\keywords{log-concavity, linear recurrences, convolution}


\date{May 7, 2014}

\maketitle

\section{Introduction}

A sequence $(a_n)$ is said to be log-concave if $a_n^2 \geq a_{n - 1}
a_{n + 1}$ for all $n$. If all terms of the sequence are positive, then log-concavity
implies unimodality. For a very nice introduction and many examples of both
unimodal and log-concave sequences we refer to \cite{stanley1989a}.

Following Boros and Moll \cite{moll-irresistible}, we denote with
$\mathcal{L}$ the operator which sends a sequence $(a_n)$ to the sequence
$(a_n^2 - a_{n - 1} a_{n + 1})$. Then $(a_n)$ is log-concave if and only if
$\mathcal{L} (a_n) \geq 0$. Similarly, the sequence $(a_n)$ is said to be
$m$-log-concave if $\mathcal{L}^j (a_n) \geq 0$ for all $j = 0, 1,
\ldots, m$. If $(a_n)$ is $m$-log-concave for all $m > 0$, then it is said to
be $\infty$-log-concave. Often, we will consider the generating function $f
(x) = \sum_{n \geq 0} a_n x^n$. In that case, we write $\mathcal{L} [f]
(x) = \sum_{n \geq 0} (a_n^2 - a_{n - 1} a_{n + 1}) x^n$ with the
understanding that $a_{- 1} = 0$.

\begin{example}
  \label{eg:binomfix}The sequence $\binom{n}{2}$ is fixed by the operator
  $\mathcal{L}$ because
  \begin{equation*}
    \binom{n}{2}^2 - \binom{n - 1}{2} \binom{n + 1}{2} = \binom{n}{2} .
  \end{equation*}
  Since the sequence is nonnegative, it is therefore $\infty$-log-concave. In
  Section~\ref{sec:fixedbyL}, we will characterize all sequences fixed by
  $\mathcal{L}$.
\end{example}

More generally, as a warm-up problem, it was conjectured in
\cite{moll-irresistible} that the binomial coefficients (either rows, that
is $\binom{n}{0}, \binom{n}{1}, \ldots, \binom{n}{n}$, or columns, that is
$\binom{n}{n}, \binom{n + 1}{n}, \binom{n + 2}{n}, \ldots$, of Pascal's
triangle) are $\infty$-log-concave. That the rows of Pascal's triangle are
indeed $\infty$-log-concave was recently proven by Br{\"a}nd{\'e}n
\cite{branden-inflog} who established, much more generally, that the
coefficients of polynomials, all of whose roots are negative and real, are
always $\infty$-log-concave. This connection of the location of roots and
log-concavity will be briefly reviewed in Section~\ref{sec:review}. We remark
that, on the other hand, the case of columns of Pascal's triangle is still
wide open; based on extensive computations by Kauers and Paule
\cite{kauers-paule} it is only known that they are $5$-log-concave.

As will also be reviewed in Section~\ref{sec:review}, a sequence is
$\infty$-log-concave if it satisfies the stronger condition $a_n^2 \geq r
a_{n - 1} a_{n + 1}$ for $r \geq (3 + \sqrt{5}) / 2 \approx 2.618$. This
criterion, due to Craven and Csordas \cite{cc-lc02} as well as McNamara and
Sagan \cite{mcnamara-sagan-inflog}, generalizes to a powerful approach of
showing that a specific sequence is $\infty$-log-concave. On the other hand,
there are sequences, like the one in Example~\ref{eg:binomfix}, that are fixed
by the operator $\mathcal{L}$ (or one of its powers) and are
$\infty$-log-concave for this reason. This leads us to investigate the
sequences fixed by $\mathcal{L}$ in Section~\ref{sec:fixedbyL} as well as the
sequences fixed by $\mathcal{L}^2$ in Section~\ref{sec:fixedbyL2}. In both
cases, we find that, surprisingly, these sequences fixed by non-linear
operators are characterized by {\emph{linear}} 4-term recurrences with
constant coefficients. This phenomenon does not appear to extend to the
sequences fixed by $\mathcal{L}^m$ for $m > 2$. As an application, we ask if
these criteria for $\infty$-log-concavity can be combined to yield an
algorithm that decides, in finite time, whether or not a given finite sequence
is $\infty$-log-concave.

In the final section, Section~\ref{sec:powers}, we start with positive
sequences and repeatedly convolute them with themselves. That is to say, we
consider the coefficients of powers of a given polynomial. The central limit
theorem suggests that, as the exponent increases, the shape of the resulting
sequences approaches the shape of a normal distribution. One therefore expects
the sequences to become more and more log-concave. Indeed, we observe that
each sequence appears to become $\infty$-log-concave in a finite number of
steps.

\section{Review of multiple log-concavity}\label{sec:review}

Newton's famous and classical theorem on real roots states that if the
polynomial $p (x) = a_0 + a_1 x + \ldots + a_d x^d$ has only negative real
roots, then its coefficients $(a_n)$ are log-concave. We refer to
\cite{niculescu-newton} for historic information and related results.

\begin{remark}
  In fact, somewhat stronger, Newton's theorem implies that the numbers $a_n /
  \binom{d}{n}$ form a log-concave sequence. This is indeed stronger since the
  binomial coefficients are log-concave and the Hadamard product of two
  log-concave sequences is again log-concave. If only log-concavity is
  desired, the assumptions of Newton's theorem can be weakened: it is shown in
  \cite{bahls-lc11} that the condition that all roots are negative (and
  therefore real) can be replaced with the condition that all roots lie in
  the sector defined by $| \arg (- z) | \leq \pi / 3$ (equivalently, all
  roots $z = a + b i$ satisfy both $a \leq 0$ and $b^2 \leq 3 a^2$).
\end{remark}

Br\"and\'en \cite{branden-inflog} recently established the following more
general theorem which was previously, and independently, conjectured by
Stanley, McNamara--Sagan \cite{mcnamara-sagan-inflog} and Fisk
\cite{fisk-q08}.

\begin{theorem}
  {\dueto{\cite{branden-inflog}}}Let $p (x) = \sum_{n = 0}^d a_n x^n$. If
  all roots of $p$ are negative and real then so are the roots of
  \begin{equation*}
    \mathcal{L} [p] (x) = \sum_{n = 0}^d (a_n^2 - a_{n - 1} a_{n + 1}) x^n .
  \end{equation*}
  Here, it is understood that $a_{- 1} = 0$ as well as $a_{d + 1} = 0$.
\end{theorem}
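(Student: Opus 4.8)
The plan is to express the nonlinear operator $\mathcal{L}$ through a single integral transform of $p$ and then to upgrade a resulting positivity statement into a statement about the location of the zeros. Since the property of having only negative real roots is closed under uniform limits of polynomials of fixed degree whose leading coefficient is bounded away from zero (a Hurwitz-type argument), I would first reduce to the generic case in which $p$ has $d$ distinct negative roots, treat that case, and then pass to the limit.

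The key identity lifts $p$ to the two-variable polynomial $F(x,u)=p(xu)\,p(x/u)=\sum_{m,n}a_m a_n\,x^{m+n}u^{m-n}$ and reads off $\mathcal{L}[p]$ from a single $u$-grading. On the level $x^{2k}$ the coefficient of $u^0$ is $a_k^2$, while the coefficient of each of $u^{2}$ and $u^{-2}$ is $a_{k-1}a_{k+1}$; so if $\Lambda_u$ denotes the linear functional in $u$ with $\Lambda_u(u^0)=1$, $\Lambda_u(u^{\pm 2})=-\tfrac12$, and $\Lambda_u(u^j)=0$ otherwise, then $\mathcal{L}[p](x^2)=\Lambda_u F(x,u)$. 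Realizing $\Lambda_u$ as integration over the unit circle against the nonnegative density $\tfrac{1}{2\pi}(1-\cos 2\phi)=\tfrac{1}{\pi}\sin^2\phi$, and using that $F(x,e^{i\phi})=p(xe^{i\phi})\,p(\overline{xe^{i\phi}})=|p(xe^{i\phi})|^2$ for real $x$ (as $p$ has real coefficients), gives the representation
\begin{equation*}
  \mathcal{L}[p](x^2)=\frac{1}{\pi}\int_0^{2\pi}\bigl|p(xe^{i\phi})\bigr|^2\sin^2\phi\,d\phi .
\end{equation*}
In particular $\mathcal{L}[p](y)\ge 0$ for all $y\ge 0$, which already recovers the log-concavity (Newton) half of the statement; the real content, however, is the far stronger claim that every root of $\mathcal{L}[p]$ is negative and real.

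The main obstacle is precisely this upgrade from nonnegativity on the half-line to negative real-rootedness, and it cannot be reached by elementary means. A naive induction on $\deg p$ fails: writing $p(x)=(x+r)q(x)$ with $q(x)=\sum b_n x^n$ and $r>0$ yields
\begin{equation*}
  \mathcal{L}[p]_n=\mathcal{L}[q]_{n-1}+r^2\,\mathcal{L}[q]_n+r\,\bigl(b_{n-1}b_n-b_{n-2}b_{n+1}\bigr),
\end{equation*}
and the cross term $b_{n-1}b_n-b_{n-2}b_{n+1}$ is not of the form $\mathcal{L}[\cdot]$, so the recursion does not close. Instead I would route the integral representation through the theory of stable polynomials: interpreting the transform as the action of a linear operator on the real-rooted factors of $F$, I would verify that this operator is a genuine zero-location preserver using the Grace–Walsh–Szegő coincidence theorem together with the Borcea–Brändén characterization of stability-preserving operators via their symbols, and then track the sign of the image along the negative real axis to conclude that all of its zeros are negative and real. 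Proving that the operator underlying $\Lambda_u$ is an honest preserver — equivalently, exhibiting the interlacing that the coincidence theorem demands — is where essentially all of the work resides.
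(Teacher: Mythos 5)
Your preparatory work is correct: the Laurent expansion of $F(x,u)=p(xu)\,p(x/u)$ does give the identity $\mathcal{L}[p](x^2)=\tfrac{1}{\pi}\int_0^{2\pi}|p(xe^{i\phi})|^2\sin^2\phi\,d\phi$ for real $x$, and your computation showing that induction on a linear factor of $p$ does not close is also right. (For context: the paper does not prove this theorem at all — it is quoted from Br\"and\'en \cite{branden-inflog} and used as a black box for Corollary~\ref{cor:realrootslc} — so the only question is whether your argument is complete on its own.) It is not. The integral representation yields only that $\mathcal{L}[p](y)\geq 0$ for $y\geq 0$, and positivity on a half-line carries no information about zero location: a polynomial positive on $[0,\infty)$ can have arbitrarily many non-real zeros. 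The entire content of the theorem is the step you defer to your final paragraph, and as sketched that step hits a genuine obstruction rather than a routine verification. The Grace--Walsh--Szeg\H{o} theorem and the Borcea--Br\"and\'en symbol characterization act on objects that are stable (or real-rooted) to begin with, but the bivariate polynomial $F(x,u)=p(xu)\,p(x/u)$ is not stable in $(x,u)$: already for $p(t)=t+1$ the factor $p(xu)=xu+1$ vanishes at $(x,u)=(i,i)$, a point with both coordinates in the open upper half-plane, because the substitution $x\mapsto xu$ does not preserve stability. So in your setup there is no stable object for the preserver machinery to act on and no interlacing statement available to ``verify''; constructing an auxiliary polynomial and a linear operator to which such machinery genuinely applies is precisely Br\"and\'en's contribution, not a detail.

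Note also the structural tension in the plan: the positivity half of your argument works exactly because the functional $\Lambda_u$ is realized by the nonnegative density $\sin^2\phi/\pi$, but a nonnegative averaging kernel can never produce more than nonnegativity of the output; upgrading to real-rootedness requires the symbol-type conditions of the preserver theory, which is where your proposal stops. What you have established is a weak form of the Newton (log-concavity) inequality; the claim that $\mathcal{L}[p]$ is again real-rooted with negative roots — the actual statement — remains unproven, as you yourself concede in writing that this is ``where essentially all of the work resides.'' As it stands, the proposal is a correct reformulation plus a pointer to the literature, not a proof.
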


\begin{corollary}
  \label{cor:realrootslc}If the polynomial $p (x) = a_0 + a_1 x + \ldots + a_d
  x^d$ has only negative real roots, then its coefficients $(a_n)$ are
  $\infty$-log-concave.
\end{corollary}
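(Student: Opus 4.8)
The plan is to iterate Br\"and\'en's theorem. Write $p_0 = p$ and, for $j \geq 0$, set $p_{j + 1} = \mathcal{L} [p_j]$. By the definition of $\mathcal{L}$ on generating polynomials, the coefficient of $x^n$ in $p_j$ is exactly $\mathcal{L}^j (a_n)$, so proving $\infty$-log-concavity of $(a_n)$ amounts to showing that every $p_j$ has nonnegative coefficients.

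First I would record the elementary fact that a polynomial with only negative real roots and positive leading coefficient has strictly positive coefficients: such a polynomial factors as $c \prod_i (x + r_i)$ with $c > 0$ and each $r_i > 0$, and expanding the product yields positive coefficients. One may assume the leading coefficient $a_d$ of $p$ is positive, since otherwise $(a_n)$ would fail even to be nonnegative; note also that replacing $p$ by $- p$ leaves $\mathcal{L}$ unchanged, as $\mathcal{L} (- a_n) = \mathcal{L} (a_n)$. This reduces the corollary to controlling two features simultaneously along the iteration: reality and negativity of the roots, and positivity of the leading coefficient.

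The heart of the argument is then a single induction on $j$. Suppose $p_j$ has degree $d$, positive leading coefficient, and only negative real roots. Br\"and\'en's theorem immediately gives that $p_{j + 1} = \mathcal{L} [p_j]$ again has only negative real roots. To close the induction I would check that the degree and the sign of the leading coefficient are preserved: if $p_j = \sum_{n = 0}^d b_n x^n$ with $b_d > 0$, then the top coefficient of $\mathcal{L} [p_j]$ is $b_d^2 - b_{d - 1} b_{d + 1} = b_d^2 > 0$ because $b_{d + 1} = 0$. Hence $p_{j + 1}$ inherits all three properties, and the induction runs for every $j \geq 0$.

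Combining the two ingredients finishes the proof: for each $j$ the polynomial $p_j$ has positive leading coefficient and only negative real roots, so by the elementary fact its coefficients $\mathcal{L}^j (a_n)$ are all nonnegative; thus $(a_n)$ is $\infty$-log-concave. The argument is essentially automatic once Br\"and\'en's theorem is available, so I do not anticipate a genuine obstacle. The only point demanding care is the bookkeeping that the leading coefficient, equivalently the degree, is preserved under $\mathcal{L}$, which is precisely what keeps ``only negative real roots'' synonymous with ``nonnegative coefficients'' at every stage of the iteration.
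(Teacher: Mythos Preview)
Your proposal is correct and is precisely the intended argument: the paper states this result as an immediate corollary of Br\"and\'en's theorem and does not spell out a separate proof. Your bookkeeping on the leading coefficient under $\mathcal{L}$ is more detail than the paper provides, but it is exactly what makes the iteration close.
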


\begin{example}
  If $a_n = \binom{d}{n}$, then $p (x) = \sum_{n = 0}^d a_n x^n = (x + 1)^d$
  clearly has only negative real roots. It follows that (rows of) the binomial
  coefficients are $\infty$-log-concave, confirming the motivating conjecture
  in \cite{moll-irresistible}.
\end{example}

Note that the converse of Corollary~\ref{cor:realrootslc} is not true, as is
illustrated by the polynomial
\begin{equation*}
  P_2 (x) = \frac{3}{2} x^2 + \frac{15}{4} x + \frac{21}{8},
\end{equation*}
which has non-real roots but coefficients which are $\infty$-log-concave; this
is easily proved using the notion of $r$-factor log-concavity which is
reviewed next. We remark that $P_2 (x)$ is one of the Boros--Moll polynomials,
which occurred in the evaluation of a quartic integral
\cite{moll-irresistible}, and have been investigated by many authors hence;
we only refer to the recent article \cite{cyz-borosmoll} as well as the references
therein. Nevertheless, as illustrated by Theorem~\ref{thm:kurtz} below, a
partial converse to Br\"and\'en's result is possible if $\infty$-log-concavity
is replaced by an even stronger property.

An important tool for establishing infinite log-concavity of a specific
sequence is the notion of $r$-factor log-concavity, which has been introduced
and used by McNamara and Sagan \cite{mcnamara-sagan-inflog}. Let $r
\geq 1$. A sequence $ (a_n)$ is $r$-factor log-concave if
\begin{equation*}
  a_n^2 \geq r a_{n - 1} a_{n + 1} .
\end{equation*}
In fact, using slightly different terminology, this notion has already been
considered by Craven and Csordas in \cite{cc-lc02}. Let $\mathcal{M}_r$ be
the $r$-factor log-concave sequences. It is shown in \cite[Theorem
4.1]{cc-lc02} that
\begin{equation}
  \mathcal{L} (\mathcal{M}_r) \subset \mathcal{M}_{r + s} \label{eq:rfactor}
\end{equation}
if and only if $r \geq \frac{3 + \sqrt{5 + 4 s}}{2}$. Generalizations,
for instance to the case of decreasing sequences, appear in
\cite{grabarek-phd}. Of particular importance for our purposes is the
following consequence of the containment \eqref{eq:rfactor}.

\begin{lemma}
  \label{lem:rfactor}If a sequence $ (a_n)$ is $r$-factor log-concave for some
  $r \geq \frac{3 + \sqrt{5}}{2} \approx 2.618$, then $\mathcal{L} (a_n)$
  is $r$-factor log-concave as well. In particular, the sequence $(a_n)$ is
  $\infty$-log-concave.
\end{lemma}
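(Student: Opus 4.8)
The plan is to deduce both statements from the containment \eqref{eq:rfactor}, which does all of the heavy lifting. First I would specialize that containment to the case $s = 0$: the threshold $r \geq \frac{3 + \sqrt{5 + 4 s}}{2}$ becomes precisely $r \geq \frac{3 + \sqrt{5}}{2}$, so that \eqref{eq:rfactor} reads $\mathcal{L} (\mathcal{M}_r) \subset \mathcal{M}_r$ for every such $r$. Consequently, if $(a_n) \in \mathcal{M}_r$ with $r \geq \frac{3 + \sqrt{5}}{2}$, then $\mathcal{L} (a_n) \in \mathcal{M}_r$ as well, which is the first assertion.

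For the claim about $\infty$-log-concavity, I would iterate this stability. Since $\mathcal{M}_r$ is mapped into itself by $\mathcal{L}$, a straightforward induction shows that $\mathcal{L}^j (a_n) \in \mathcal{M}_r$ for every $j \geq 0$. It then remains to connect $r$-factor log-concavity with ordinary log-concavity. I would verify the elementary implication that any $(b_n) \in \mathcal{M}_r$ with $r \geq 1$ already satisfies $\mathcal{L} (b_n) \geq 0$: writing the $n$-th term of $\mathcal{L} (b_n)$ as $b_n^2 - b_{n - 1} b_{n + 1}$, one distinguishes the case $b_{n - 1} b_{n + 1} \geq 0$, where $r \geq 1$ gives $b_n^2 \geq r b_{n - 1} b_{n + 1} \geq b_{n - 1} b_{n + 1}$, from the case $b_{n - 1} b_{n + 1} < 0$, where $b_n^2 \geq 0 > b_{n - 1} b_{n + 1}$ is immediate. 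Applying this with $b_n = \mathcal{L}^j (a_n)$ yields $\mathcal{L}^{j + 1} (a_n) \geq 0$ for every $j \geq 0$.

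There is no genuine obstacle here; the heavy lifting is entirely in the cited inequality \eqref{eq:rfactor}, and the only points requiring care are the sign bookkeeping in the elementary implication above and the $j = 0$ term, for which $\mathcal{L}^0 (a_n) = (a_n) \geq 0$ is part of the very definition of $\infty$-log-concavity (and holds in the intended setting of positive sequences). The entire argument hinges on the sharp value $r = \frac{3 + \sqrt{5}}{2}$, which is exactly the threshold at which $\mathcal{L}$ preserves $\mathcal{M}_r$; for any smaller constant \eqref{eq:rfactor} would fail to provide a self-map and the induction would break down.
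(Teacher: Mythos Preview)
Your proposal is correct and follows exactly the route intended by the paper: the lemma is stated there simply as ``the following consequence of the containment \eqref{eq:rfactor}'', and your argument spells out precisely this consequence by specializing to $s = 0$ and iterating. Your explicit handling of the sign issue in the implication $\mathcal{M}_r \Rightarrow$ log-concave, together with the caveat about the $j = 0$ term, are welcome details that the paper leaves implicit.
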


As illustrated in the next example, this gives rise to a more general
approach, used in \cite{mcnamara-sagan-inflog}, to show that a specific
sequence is $\infty$-log-concave.

\begin{example}
  \label{eg:rfactorinfty}The polynomial $p (x) = 1 + 4 x + 6 x^2 + 4 x^3$ has
  coefficients which are $2.25$-factor log-concave. The polynomial
  $\mathcal{L} [p] (x) = 1 + 10 x + 20 x^2 + 16 x^3$ has coefficients which
  are $2.5$-factor log-concave, while the coefficients of $\mathcal{L}^2 [p]
  (x) = 1 + 80 x + 240 x^2 + 256 x^3$ are $2.8125$-factor log-concave.
  Lemma~\ref{lem:rfactor} therefore shows that, in fact, the coefficients of
  $p (x)$ are $\infty$-log-concave.
\end{example}

The concept of $r$-factor log-concavity of the coefficients of a polynomial is
intimately related to the location of zeros discussed earlier in this section.
For instance, the following result on Hurwitz stability is due to Katkova and
Vishnyakova. We refer to \cite{katkova-stable} for further details,
including the cases of lower degree, and references to related and earlier
results. Here, {\emph{strong}} log-concavity, and likewise $r$-factor strong
log-concavity, mean that the defining inequalities are strict (for instance, a
sequence $ (a_n)$ is $r$-factor strongly log-concave if $a_n^2 > r a_{n - 1}
a_{n + 1}$).

\begin{theorem}
  {\dueto{\cite{katkova-stable}}}Let $p (x)$ be a polynomial with positive
  coefficients and degree larger than $5$. If the coefficients of $p (x)$ are
  $r_0$-factor strongly log-concave, where $r_0 \approx 1.466$ is the unique
  real root of $r^3 - r^2 - 1$, then all the roots of $p (x)$ have negative
  real parts.
\end{theorem}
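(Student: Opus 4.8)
The plan is to deduce Hurwitz stability from the Hermite--Biehler theorem, which characterizes it through the even and odd parts of $p$. Writing $p(x) = h(x^2) + x\,g(x^2)$ with $h(y) = \sum_k a_{2k} y^k$ and $g(y) = \sum_k a_{2k+1} y^k$, the theorem asserts that, for a real polynomial with positive leading coefficient, all zeros of $p$ lie in the open left half-plane precisely when $h$ and $g$ have only real, negative, simple zeros and these zeros interlace. The strategy is therefore to verify these two conditions --- real-rootedness of the even and odd parts, and their interlacing --- from the assumed $r_0$-factor strong log-concavity of $(a_n)$.

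First I would establish real-rootedness of $h$ and $g$ by passing the log-concavity hypothesis to the subsequences $(a_{2k})$ and $(a_{2k+1})$. A short telescoping computation shows that $r$-factor log-concavity of $(a_n)$ forces $a_{2k}^2 \geq r^4\, a_{2k - 2} a_{2k + 2}$, and similarly for the odd-indexed terms, so that both subsequences are $r^4$-factor log-concave. By Hutchinson's classical strengthening of Newton's theorem, a positive sequence with $b_k^2 \geq 4\, b_{k - 1} b_{k + 1}$ is the coefficient sequence of a polynomial having only real negative zeros. Since $r_0 > \sqrt2$, we have $r_0^4 > 4$, and hence $h$ and $g$ each have only real, negative, simple zeros. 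This step is robust and uses the hypothesis with room to spare.

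The delicate part is interlacing, and this is exactly where the precise constant $r_0$ must enter. Interlacing of the real-rooted polynomials $h$ and $g$ is equivalent to the real-rootedness of the entire pencil $h(y) + \lambda\, g(y)$ for every real $\lambda$, or equivalently to the statement that the rational function $x\,g(x^2)/h(x^2)$ maps the upper half-plane into the lower half-plane. I would analyze the extremal sequences, those for which the log-concavity inequalities are tight: setting $a_n^2 = r\, a_{n - 1} a_{n + 1}$ yields, after normalizing away an irrelevant geometric factor, the quadratic-exponent model $a_n = \rho^{\binom n2}$ with $\rho = 1/r$, which is the configuration against which interlacing is hardest to maintain. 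Tracking the zero of $p$ closest to the imaginary axis as $\rho$ varies, and locating the value at which this zero crosses onto the axis, produces the boundary equation for stability.

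The main obstacle is to pin down this threshold and to show that it is the cubic $r^3 - r^2 - 1 = 0$ rather than a transcendental condition. Two mechanisms could produce an algebraic constant. One is that the worst interlacing failure is governed by a local three-term interaction $a_{n - 1} : a_n : a_{n + 1}$, which collapses the threshold computation to a single cubic in $r$; this would also explain the degree hypothesis, since for degrees up to $5$ the extremal configuration is pushed to the boundary and yields different (larger) constants, treated separately in \cite{katkova-stable}. The other, and perhaps the more likely, is that the cubic appears as a self-consistency (fixed-point) condition in a recursive reduction of the stability problem --- in the spirit of the Routh array or the continued-fraction expansion attached to $g/h$ --- where a scale-invariant extremal tail must reproduce itself after one reduction step. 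Establishing that the resulting fixed-point equation is exactly $r^3 - r^2 - 1$, and that $r \geq r_0$ is sharp rather than merely sufficient, is the technical heart of the argument.
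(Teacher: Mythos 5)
First, for orientation: the paper does not prove this theorem at all --- it is quoted from Katkova--Vishnyakova \cite{katkova-stable}, and the authors explicitly refer the reader to that paper for details, so there is no internal proof to compare yours against; your proposal must be judged as a stand-alone reconstruction of the cited result. Its first half holds up. The Hermite--Biehler reduction is the right framework, and your telescoping step is correct: from $a_n^2 \geq r\, a_{n-1} a_{n+1}$ one gets
\begin{equation*}
  a_{2k}^4 \;\geq\; r^2\, a_{2k-1}^2 a_{2k+1}^2 \;\geq\; r^4\, a_{2k-2}\, a_{2k}^2\, a_{2k+2},
\end{equation*}
so both parity subsequences are $r^4$-factor log-concave; since $r_0 > \sqrt{2}$, indeed $r_0^4 > 4$, and Hutchinson's criterion (or Kurtz's result, Theorem~\ref{thm:kurtz} in this paper) makes the even part $h$ and the odd part $g$ real-rooted with negative zeros.

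The second half, however, is a genuine gap, and you concede as much: you never prove that the zeros of $h$ and $g$ interlace; you only describe two mechanisms that ``could'' produce the cubic $r^3 - r^2 - 1$ and label the verification ``the technical heart of the argument.'' Real-rootedness of the even and odd parts alone gives nothing --- Hermite--Biehler requires the interlacing --- and the numerical gap between $\sqrt{2} \approx 1.414$ (all that your first step needs) and $r_0 \approx 1.466$ (what the theorem demands) is exactly a measure of the content that is missing: a proof must show where the stronger constant is forced, and your sketch contains no argument for this, only candidate shapes of one. Two smaller points: nothing is proved about your model $a_n = \rho^{\binom{n}{2}}$ actually being extremal for interlacing; and your parenthetical guess that degrees up to $5$ yield \emph{larger} constants is backwards --- for low degrees \emph{smaller} constants suffice (e.g.\ for a cubic with positive coefficients, $r \geq 1$ already gives $a_1 a_2 > r^2 a_0 a_3 \geq a_0 a_3$, which is the Routh--Hurwitz condition for stability), which is why the constant $r_0$ is only needed, and the theorem only stated, for degree larger than $5$.
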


As a second example, we cite the following result of Kurtz
\cite{kurtz-realroots}, as stated in \cite{csordas-mlc}, which may be
viewed as a partial converse to Newton's theorem on real roots.

\begin{theorem}
  {\dueto{\cite{kurtz-realroots}}}\label{thm:kurtz}Let $p (x)$ be a
  polynomial with positive coefficients. If the coefficients of $p (x)$ are
  $4$-factor log-concave, then all the roots of $p (x)$ are real (and hence
  negative).
\end{theorem}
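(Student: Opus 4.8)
The plan is to prove the statement under the strict inequality $a_n^2 > 4 a_{n-1} a_{n+1}$ and then recover the general (non-strict) case by a perturbation. For the reduction I would replace $(a_n)$ by $\tilde a_n = \lambda^{-\binom{n}{2}} a_n$ with $\lambda > 1$. Since the second difference $\binom{n-1}{2} + \binom{n+1}{2} - 2\binom{n}{2}$ equals $1$, this scales each quotient $a_n^2 / (a_{n-1} a_{n+1})$ by the factor $\lambda$, so the $\tilde a_n$ are strictly $4$-factor log-concave while remaining positive, of the same degree, and converging coefficientwise to $(a_n)$ as $\lambda \to 1^+$. Because the roots of a polynomial depend continuously on its coefficients and a coefficientwise limit of real-rooted polynomials of fixed degree is again real-rooted, establishing the strict case suffices; positivity of the coefficients then forces the limiting roots to be negative.

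For the strict case I would pass to $q(y) = p(-y) = \sum_{n=0}^d (-1)^n a_n y^n$ and show that $q$ has $d$ distinct positive roots. Writing $r_n = a_n / a_{n-1}$, the hypothesis reads $r_n > 4 r_{n+1}$, so the ratios decrease by a factor exceeding $4$. I would then test $q$ at the geometric means $y_k = 1 / \sqrt{r_k r_{k+1}}$ for $k = 1, \ldots, d-1$, which are increasing. At $y_k$ the terms $c_n = a_n y_k^n$ are unimodal in $n$ with peak at $n = k$, because $c_{n+1}/c_n = r_{n+1} y_k$ decreases in $n$ and passes through $1$ between $n = k-1$ and $n = k$; moreover the two neighbours satisfy $c_{k-1} = c_{k+1} = c_k / s_k$ with $s_k = \sqrt{r_k / r_{k+1}} > 2$.

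The heart of the argument is a sign computation at these points. Splitting $q(y_k)$ at the peak, each of the two one-sided alternating tails has monotonically decreasing magnitudes, hence the sign of its leading term and magnitude bounded by that term. This yields $(-1)^k q(y_k) \geq c_k - c_{k-1} - c_{k+1} = c_k (1 - 2/s_k) > 0$, so $\operatorname{sign} q(y_k) = (-1)^k$. Together with $q(0) = a_0 > 0$ and $q(y) \sim (-1)^d a_d y^d$ as $y \to +\infty$, the signs of $q$ at $0, y_1, \ldots, y_{d-1}, +\infty$ alternate, producing $d$ sign changes and therefore $d$ distinct positive roots; as $\deg q = d$ these are all of its roots, so $p$ has $d$ distinct negative roots.

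The main obstacle is precisely this sign estimate, and it is where the constant $4$ is sharp: the neighbouring terms $c_{k\pm 1}$ are each $c_k / s_k$, so no single term dominates the remaining sum, and one must exploit the alternating structure to conclude $c_k - c_{k-1} - c_{k+1} > 0$. This inequality holds exactly because $s_k > 2$, that is, because the log-concavity factor strictly exceeds $4$; at the boundary value $4$ one obtains only $(-1)^k q(y_k) \geq 0$, which also explains why the reduction to strict inequalities in the first step is essential.
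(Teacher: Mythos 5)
The paper itself offers no proof of this theorem --- it is quoted from Kurtz \cite{kurtz-realroots} (as stated in \cite{csordas-mlc}) --- so there is no internal argument to compare against; what you have written is a correct, self-contained reconstruction, essentially along the lines of Kurtz's original sign-change proof. Both pillars of your argument check out. The perturbation $\tilde a_n = \lambda^{-\binom{n}{2}} a_n$ does multiply each ratio $a_n^2/(a_{n-1}a_{n+1})$ by exactly $\lambda$ (the second difference of $\binom{n}{2}$ is $1$), the leading coefficient stays nonzero so the degree does not drop, and real-rootedness survives coefficientwise limits of fixed degree; this legitimately reduces the non-strict statement to the strict one, with the limit allowed to have repeated roots, which is all the theorem claims. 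The sign computation is also sound: at $y_k = 1/\sqrt{r_k r_{k+1}} = \sqrt{a_{k-1}/a_{k+1}}$ one has $c_{k-1} = c_{k+1} = c_k/s_k$ with $s_k = \sqrt{r_k/r_{k+1}} > 2$, the ratios $c_{n+1}/c_n = r_{n+1} y_k$ are strictly decreasing in $n$, exceeding $2$ for $n < k$ and falling below $1/2$ for $n \geq k$, so both one-sided tails are alternating with strictly decreasing magnitudes and the standard alternating-series bound gives $(-1)^k q(y_k) \geq c_k - c_{k-1} - c_{k+1} = c_k(1 - 2/s_k) > 0$; combined with the signs at $0$ and at $+\infty$ this yields $d$ sign changes and hence $d$ distinct positive roots of $q(y) = p(-y)$. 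Your approach buys a little more than the cited statement: under strict $4$-factor log-concavity the roots are automatically distinct, and the collapse of your estimate exactly at $s_k = 2$ matches the remark following the theorem in the paper that $4$ cannot be replaced by $4 - \varepsilon$. The only (cosmetic) omissions are the trivial degrees $d \leq 1$ and an explicit statement that the test points $y_k$ lie strictly between $0$ and the large evaluation point, both of which are immediate.
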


It is also shown in \cite{kurtz-realroots} that $4$-factor log-concavity
cannot be replaced $(4 - \varepsilon)$-factor log-concavity for any
$\varepsilon > 0$.

\section{Sequences fixed by $\mathcal{L}$}\label{sec:fixedbyL}

As observed in Example~\ref{eg:binomfix}, the sequence $\binom{n}{2}$ is fixed
by $\mathcal{L}$ and therefore $\infty$-log-concave. In this section, we
characterize all sequences $(a_n)_{n \geq 0}$ that are fixed by the
operator $\mathcal{L}$, that is $\mathcal{L} (a_n) = (a_n)$, or, equivalently,
\begin{equation}
  a_n^2 - a_{n - 1} a_{n + 1} = a_n \label{eq:recLfix}
\end{equation}
for all indices $n \geq 0$ (with the understanding that $a_{- 1} = 0$).

\begin{remark}
  Let us note that the, apparently more general, characterization of sequences
  $(a_n)_{n \geq 0}$ such that $\mathcal{L} (a_n) = \lambda (a_n)$ for
  some number $\lambda$ reduces to the case $\lambda = 1$. Indeed, if
  $\mathcal{L} (a_n) = \lambda (a_n)$ then $\mathcal{L} (b_n) = (b_n)$ with
  $b_n = a_n / \lambda$.
\end{remark}

Suppose that $(a_n)_{n \geq 0}$ is a sequence fixed by $\mathcal{L}$.
Note that if $a_{m - 1} \neq 0$ and $a_m = 0$ for some $m > 0$, then $a_{m +
1} = 0$ as well. In particular, the sequence $(a_{n + m + 2})_{n \geq 0}$
is again fixed by $\mathcal{L}$. In characterizing all sequences that are
fixed by $\mathcal{L}$, it is therefore no loss of generality to assume that
the sequence $(a_n)$ has no {\emph{internal zeros}}, meaning that if $a_m = 0$
for some $m \geq 0$ then $a_n = 0$ for all $n \geq m$. If $(a_n)$
has no internal zeros, then $a_0 = 1$ unless $(a_n)$ is the zero sequence.
Assuming that $(a_n)$ is not the zero sequence, the value of $a_1$ then
determines $(a_n)$.

\begin{example}
  \label{eg:fixed}With the value $k$ given, let $(a_n)_{n \geq 0}$ be the
  unique sequence with $a_1 = k$ which is fixed by $\mathcal{L}$ and has no
  internal zeros. When $k \in \{0, 1, 2\}$ this sequence is finitely supported
  with values $(1)$, $(1, 1)$ and $(1, 2, 2, 1)$, respectively. On the other
  hand, if $k \geq 3$ is an integer, then the corresponding sequence is
  infinite and all of its terms are positive integers. These claims are not
  \textit{a priori} obvious but will be direct consequences of the
  characterization in Theorem~\ref{thm:fixL}. The first few cases, with $k =
  3, 4, 5$, are:
  \begin{eqnarray*}
    &  & 1, 3, 6, 10, 15, 21, 28, 36, \ldots\\
    &  & 1, 4, 12, 33, 88, 232, 609, 1596, \ldots\\
    &  & 1, 5, 20, 76, 285, 1065, 3976, 14840, \ldots
  \end{eqnarray*}
  In the first case, corresponding to $k = 3$, the sequence is given by $a_n =
  \binom{n + 2}{2}$, the introductory Example~\ref{eg:binomfix}, and in the
  case $k = 4$ we identify the sequence as $a_n = F_{2 n + 3} - 1$ with $F_n$
  denoting the Fibonacci numbers.
\end{example}

The alert reader may have noticed that the infinite sequences given in
Example~\ref{eg:fixed} each have a rational generating function. The next
result and its corollary show that this is always the case.

\begin{theorem}
  \label{thm:fixL}Let $k$ be an arbitrary number. Then the sequence $(a_n)_{n
  \geq 0}$ defined by
  \begin{equation*}
    \sum_{n \geq 0} a_n x^n = \frac{1}{1 - k x + k x^2 - x^3} =
     \frac{1}{(1 - x) (1 - (k - 1) x + x^2)}
  \end{equation*}
  is fixed by $\mathcal{L}$. Note that $a_0 = 1$, $a_1 = k$.
\end{theorem}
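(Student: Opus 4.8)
The plan is to convert the generating-function description into a linear recurrence and then verify the defining identity \eqref{eq:recLfix} using an explicit closed form. Clearing denominators in $\sum_{n\geq 0} a_n x^n = 1/(1 - kx + kx^2 - x^3)$ shows that $(a_n)$ satisfies the three-term-step recurrence $a_n = k a_{n-1} - k a_{n-2} + a_{n-3}$ for all $n \geq 1$ (with the convention $a_{-1} = a_{-2} = 0$), and in particular $a_0 = 1$, $a_1 = k$, $a_2 = k^2 - k$, $a_3 = k^3 - 2k^2 + 1$. What must be shown is that $e_n := a_n^2 - a_{n-1} a_{n+1}$ equals $a_n$ for every $n \geq 0$. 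The three smallest cases $n = 0,1,2$ are immediate substitutions using these values, so the real content is the general $n$.

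For that I would pass to the closed form. The denominator factors as $(1-x)(1-\beta x)(1-\gamma x)$ with $\beta + \gamma = k-1$ and, crucially, $\beta\gamma = 1$; the partial-fraction expansion gives $a_n = A + B\beta^n + C\gamma^n$, where the residues are the usual ones, e.g. $A = 1/((1-\beta)(1-\gamma)) = 1/(3-k)$. The engine of the argument is the Tur\'an-type identity $e_n = -\sum_{i<j} c_i c_j (r_i - r_j)^2 (r_i r_j)^{n-1}$, valid for any sequence of the form $a_n = \sum_i c_i r_i^n$; it follows by expanding the two products defining $e_n$ and pairing the index $(i,j)$ with $(j,i)$, whereupon the diagonal terms cancel and each off-diagonal pair contributes a perfect square.

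The reciprocal relation $\beta\gamma = 1$ is exactly what makes this work: among the pairwise products $r_i r_j$ occurring in $e_n$, namely $\beta$, $\gamma$, and $\beta\gamma = 1$, the last is constant in $n$. Hence $e_n$ is again a combination of $1, \beta^n, \gamma^n$, i.e. it lives in the same solution space as $(a_n)$, and it suffices to match three coefficients. The $\beta^n$- and $\gamma^n$-coefficients match after the elementary simplification $A(1-\beta)^2 = -\beta$ and its $\gamma$-analogue, while the constant coefficient demands $-BC(\beta-\gamma)^2 = A$. I expect this constant-term identity to be the main obstacle, but it collapses via $3 - k = -(\beta-1)^2/\beta$ to the very same relation. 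Equivalently — and this is the cleanest bookkeeping — one notes that $e_n$ and $a_n$ obey the same linear recurrence and agree at $n = 0,1,2$, hence coincide. The one configuration not covered by distinct roots is $k = 3$ (a triple root at $1$, where $a_n = \binom{n+2}{2}$ as in Example~\ref{eg:binomfix}); this I would absorb by reading \eqref{eq:recLfix} as a polynomial identity in $k$, valid since it holds for the infinitely many $k \neq 3, -1$ giving distinct roots.
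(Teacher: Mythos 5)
Your proof is correct, but it follows a genuinely different route from the paper's. The paper's argument is purely inductive and uniform in $k$: it factors the recurrence operator as $(1-S)(1-(k-1)S+S^2)$, deduces from the factor $1-S$ that $(a_n)$ satisfies the \emph{nonhomogeneous} two-term recurrence $a_n - (k-1)a_{n-1} + a_{n-2} = 1$, and then shows by a three-line induction that the identity $a_n(a_n-1) = a_{n+1}a_{n-1}$ propagates from $n$ to $n+1$; no closed form, no partial fractions, and no case distinction for repeated roots ($k=3,-1$) are ever needed. Your route---closed form $a_n = A + B\beta^n + C\gamma^n$, the Tur\'an-type identity
\begin{equation*}
  a_n^2 - a_{n-1}a_{n+1} = -\sum_{i<j} c_i c_j (r_i - r_j)^2 (r_i r_j)^{n-1},
\end{equation*}
and the observation that the pairwise products of the roots $\{1,\beta,\gamma\}$ with $\beta\gamma = 1$ are again $\{1,\beta,\gamma\}$, so that $\mathcal{L}$ preserves their span---is exactly the mechanism the paper deploys later for Theorem~\ref{thm:fixL2}, where $\alpha_1\alpha_2\alpha_3=1$ forces the pairwise products to be $\{1/\alpha_k\}$ and hence $\mathcal{L}^2$ (rather than $\mathcal{L}$) to act as the identity. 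So your proof in effect unifies Theorems~\ref{thm:fixL} and~\ref{thm:fixL2} under one structural explanation, at the cost of extra bookkeeping: you must verify the residue identities (all of which check out: $A(1-\beta)^2 = -\beta$ follows from $1-\beta = -\beta(1-\gamma)$, and $-BC(\beta-\gamma)^2 = A$ from $(\beta-1)(\gamma-1) = 3-k$, $(\beta+1)(\gamma+1)=k+1$, $(\beta-\gamma)^2 = (k-3)(k+1)$), and you must dispose of the degenerate values $k = 3, -1$ separately, which your polynomial-identity-in-$k$ argument does legitimately since each $a_n$ is a polynomial in $k$. Your ``cleanest bookkeeping'' variant---both sides lie in the solution space of the same order-$3$ recurrence and agree at $n=0,1,2$---is essentially the $C$-finite ansatz the paper mentions in Remark~\ref{rk:cfinite} as an alternative proof, sharpened from order $6$ to order $3$ by your observation that the diagonal terms cancel. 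In short: the paper's proof is shorter and avoids all degeneracy issues; yours is more computational but explains structurally why reciprocal root pairs make $\mathcal{L}$-invariance happen, and generalizes directly to the $\mathcal{L}^2$ setting.
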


\begin{proof}
  Let $S$ be the (inverse) shift operator defined by $S a_n = a_{n - 1}$, and
  consider the operator
  \begin{equation}
    L := 1 - k S + k S^2 - S^3 = \left( 1 - S \right) \left( 1 - \left( k
    - 1 \right) S + S^2 \right) . \label{eq:recL}
  \end{equation}
  We note that the rational generating function for $\left( a_n \right)$ is
  equivalent to the fact that the recurrence $L a_n = 0$ holds, for all $n
  \geq 1$, with initial conditions $a_0 = 1$ and $a_{- 1} = a_{- 2} = 0$.
  
  The factorization of $L$, as on the right-hand side of \eqref{eq:recL},
  implies that the sequence $b_n = \left( 1 - \left( k - 1 \right) S + S^2
  \right) a_n = a_n - \left( k - 1 \right) a_{n - 1} + a_{n - 2}$ is constant.
  Since $b_0 = a_0 = 1$ it follows that $b_n = 1$ for all $n \geq 0$. In
  other words, $\left( a_n \right)$ satisfies the nonhomogeneous recurrence
  \begin{equation}
    a_n - \left( k - 1 \right) a_{n - 1} + a_{n - 2} = 1 \label{eq:recLnh}
  \end{equation}
  for all $n \geq 0$.
  
  To prove that $\left( a_n \right)$ is fixed under $\mathcal{L}$, we need to
  show that
  \begin{equation}
    a_n \left( a_n - 1 \right) = a_{n + 1} a_{n - 1} \label{eq:recFix}
  \end{equation}
  for all $n \geq 0$. Equation~\eqref{eq:recFix} clearly holds for $n =
  0$. For the purpose of induction, assume that \eqref{eq:recFix} holds for
  some $n$. Then
  \begin{eqnarray*}
    a_{n + 1} \left( a_{n + 1} - 1 \right) & \equallim^{( \ref{eq:recLnh})} &
    a_{n + 1} \left( \left( k - 1 \right) a_n - a_{n - 1} \right)\\
    & = & \left( k - 1 \right) a_{n + 1} a_n - a_{n + 1} a_{n - 1}\\
    & \equallim^{( \ref{eq:recFix})} & \left( k - 1 \right) a_{n + 1} a_n -
    a_n \left( a_n - 1 \right)\\
    & = & a_n \left( \left( k - 1 \right) a_{n + 1} - \left( a_n - 1 \right)
    \right)\\
    & \equallim^{( \ref{eq:recLnh})} & a_{n + 2} a_n,
  \end{eqnarray*}
  which shows that \eqref{eq:recFix} also holds for $n + 1$ in place of $n$.
  The result therefore follows by induction.
\end{proof}

\begin{remark}
  \label{rk:cfinite}Note that the sequence $(a_n)$, defined in
  Theorem~\ref{thm:fixL}, is $C$-finite \cite{zeilberger90}. Moreover, since
  $C$-finite sequences form an algebra, the sequences $(a_n^2)$ and $(a_{n -
  1} a_{n + 1})$, as well as any linear combination of these, are again
  $C$-finite. In order to obtain an alternative and automatic proof of
  Theorem~\ref{thm:fixL}, we can therefore use the $C$-finite ansatz, recently
  advertised in \cite{zeilberger-cfinite}, to show that
  \begin{equation*}
    a_n^2 - a_{n - 1} a_{n + 1} = a_n,
  \end{equation*}
  thus proving that $(a_n)$ is indeed fixed by $\mathcal{L}$.
\end{remark}

Theorem~\ref{thm:fixL} has the, possibly surprising, consequence that the
solutions of the non-linear three-term recurrence \eqref{eq:recLfix} in fact
satisfy a linear four-term recurrence.

\begin{corollary}
  \label{cor:rec}Let $N \in \mathbb{N} \cup \{\infty\}$. Suppose that the
  sequence $(a_n)_{n = 0}^N$ is fixed by $\mathcal{L}$ and that $a_n \neq 0$
  for all $0 \leq n \leq N$. Then, with $k = a_1$,
  \begin{equation}
    a_n - k (a_{n - 1} - a_{n - 2}) - a_{n - 3} = 0 \label{eq:recLfix4}
  \end{equation}
  for all $n = 1, 2, \ldots, N$, with initial conditions $a_0 = 1$ and $a_{-
  1} = a_{- 2} = 0$.
\end{corollary}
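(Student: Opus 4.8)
The statement is a converse to Theorem~\ref{thm:fixL}: rather than starting from the linear recurrence and deducing the fixed-point property, I want to extract the linear recurrence directly from the nonlinear fixed-point equation \eqref{eq:recLfix}. Because the hypothesis guarantees $a_n \neq 0$ for all $0 \le n \le N$, I may freely divide by terms of the sequence. The plan is to introduce the auxiliary quantity
\[
 r_n = \frac{a_{n+1} + a_{n-1} - 1}{a_n}
\]
for $0 \le n \le N-1$ and to show that it does not depend on $n$. Its initial value is $r_0 = (a_1 + a_{-1} - 1)/a_0 = k - 1$, using $a_0 = 1$, $a_{-1} = 0$, and $a_1 = k$.

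The heart of the argument is to prove $r_n = r_{n-1}$ for $1 \le n \le N-1$. Clearing denominators, this amounts to the identity
\[
 a_{n-1}(a_{n+1} + a_{n-1} - 1) = a_n(a_n + a_{n-2} - 1).
\]
This is where the fixed-point equation enters twice: \eqref{eq:recLfix} at index $n$ gives $a_{n-1} a_{n+1} = a_n^2 - a_n$, and \eqref{eq:recLfix} at index $n-1$ gives $a_{n-2} a_n = a_{n-1}^2 - a_{n-1}$. Substituting these, both sides reduce to $a_n^2 + a_{n-1}^2 - a_n - a_{n-1}$, so the identity holds. Therefore $r_n = k-1$ throughout, which is to say $(a_n)$ satisfies the \emph{nonhomogeneous} three-term recurrence $a_{n+1} - (k-1) a_n + a_{n-1} = 1$.

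Subtracting consecutive instances of this three-term recurrence eliminates the constant $1$ and produces the homogeneous four-term recurrence $a_n - k a_{n-1} + k a_{n-2} - a_{n-3} = 0$, which is exactly \eqref{eq:recLfix4}. The $r_n$ computation delivers the three-term recurrence for $0 \le n \le N-1$; to justify the differencing at the very first index, I separately record its formal instance at $n = -1$, which under the conventions $a_{-1} = a_{-2} = 0$ reduces to the true statement $a_0 = 1$ (the ratio $r_{-1}$ itself is unavailable, its denominator being $a_{-1} = 0$). The one real decision is the choice of the auxiliary quantity $r_n$; once the correct ``$-1$'' is included in the numerator, the verification that $r_n$ is constant is a one-line computation whose cancellation reflects the symmetry between the fixed-point equation at two consecutive indices, and everything else is routine bookkeeping of indices and initial values. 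Alternatively, one could combine Theorem~\ref{thm:fixL} with the observation, noted before Example~\ref{eg:fixed}, that a zero-free sequence fixed by $\mathcal{L}$ is determined by the single value $a_1$; the direct approach above, however, sidesteps having to rule out zeros in the comparison sequence.
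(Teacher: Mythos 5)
Your proof is correct, and it takes a genuinely different route from the paper's. The paper disposes of this corollary in three sentences by a uniqueness argument: a sequence fixed by $\mathcal{L}$ with no zero terms is determined by the single value $a_1$ (since \eqref{eq:recLfix} gives $a_{n+1} = (a_n^2 - a_n)/a_{n-1}$ whenever $a_{n-1} \neq 0$), while Theorem~\ref{thm:fixL} exhibits a sequence with the same $a_0, a_1$ that is fixed by $\mathcal{L}$ and satisfies the linear recurrence; the two sequences must therefore agree on the range of nonzero terms --- this is exactly the ``alternative'' you mention in your closing sentence. You instead extract the recurrence directly from the nonlinear equation: the constancy of $r_n = (a_{n+1} + a_{n-1} - 1)/a_n$, verified by invoking \eqref{eq:recLfix} at the two consecutive indices $n$ and $n-1$ (both sides indeed reduce to $a_n^2 + a_{n-1}^2 - a_n - a_{n-1}$), recovers the nonhomogeneous three-term recurrence \eqref{eq:recLnh}, and differencing consecutive instances then yields \eqref{eq:recLfix4}; your index bookkeeping, including the formal instance at $n = -1$, covers precisely the claimed range $1 \leq n \leq N$. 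In effect you prove the converse implication of Theorem~\ref{thm:fixL} by running its proof backwards (the paper goes from \eqref{eq:recLfix4} through \eqref{eq:recLnh} to the fixed-point identity by induction; you go from the fixed-point identity through \eqref{eq:recLnh} to \eqref{eq:recLfix4}), rather than by citing its statement plus uniqueness. What each buys: the paper's argument is shorter given that Theorem~\ref{thm:fixL} is already proved, while yours is self-contained, requires no comparison sequence, and makes the mechanism (why a nonlinear three-term relation forces a linear four-term one) visible. One small point: you use $a_0 = 1$ without justification; it follows from \eqref{eq:recLfix} at $n = 0$, which reads $a_0^2 = a_0$, together with $a_0 \neq 0$ (the paper records this fact just before Example~\ref{eg:fixed}), and it is worth saying so since $a_0 = 1$ is part of the conclusion being proved.
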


\begin{proof}
  Recall that if $(a_n)$ is fixed by $\mathcal{L}$ and $a_0 \neq 0$, then $a_0
  = 1$ and the value of $a_1$ determines the initial segment of nonzero terms
  of $(a_n)$. On the other hand, for any value $k = a_1$,
  Theorem~\ref{thm:fixL} provides a sequence that is fixed by $\mathcal{L}$.
  It follows that the two sequences have to agree for all initial nonzero
  terms.
\end{proof}

As illustrated by Example~\ref{eg:fixed}, the sequences fixed by $\mathcal{L}$
usually have infinite support. We now determine all sequences with finite
support that are fixed by $\mathcal{L}$.

\begin{proposition}
  \label{prop:psr:fix}Let $s \geq 3$ and $1 \leq r < s$ be integers.
  If $s \neq 2 r$, then
  \begin{equation}
    p_{s, r} (x) = \frac{1 - x^s}{(1 - x) (1 - 2 \cos (2 \pi r / s) x + x^2)}
    \label{eq:psr}
  \end{equation}
  is a degree $s - 3$ polynomial, and the coefficients of $p_{s, r}$ are fixed
  by $\mathcal{L}$. Moreover, every finitely supported sequence which is fixed
  by $\mathcal{L}$ and has no internal zeros arises in this way.
\end{proposition}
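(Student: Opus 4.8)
The plan is to handle both directions by bridging between the finite polynomial $p_{s,r}$ and the infinite, $C$-finite sequence $(b_n)$ produced by Theorem~\ref{thm:fixL} for the parameter $k = 1 + 2\cos(2\pi r/s)$, since the denominator of $p_{s,r}$ is exactly $(1-x)(1-(k-1)x+x^2)$ for this $k$. I would first dispose of the degree claim. The three roots of the denominator are $1$, $e^{2\pi i r/s}$ and $e^{-2\pi i r/s}$, all $s$-th roots of unity. They are pairwise distinct precisely because $1 \leq r < s$ (so neither $e^{\pm 2\pi i r/s}$ equals $1$) and $s \neq 2r$ (so $e^{2\pi i r/s} \neq e^{-2\pi i r/s}$). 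Hence the denominator divides $1 - x^s$ with polynomial quotient, and comparing leading terms shows the quotient has degree $s-3$ and leading coefficient $1$.

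For the statement that the coefficients of $p_{s,r}$ are fixed by $\mathcal{L}$, the key observation is that the companion sequence $(b_n)$ is itself fixed by $\mathcal{L}$ (Theorem~\ref{thm:fixL}) and is periodic with period dividing $s$: its characteristic roots are the distinct $s$-th roots of unity $1, e^{\pm 2\pi i r/s}$, so $b_{n+s}=b_n$. Writing $p_{s,r}(x) = (1-x^s)\sum_{n\geq 0} b_n x^n$, periodicity gives coefficients $a_n = b_n$ for $0 \leq n \leq s-1$ and $a_n = 0$ for $n \geq s$, while the degree count forces in addition $b_{s-2}=b_{s-1}=0$. Then $a_n^2 - a_{n-1}a_{n+1} = a_n$ follows from the corresponding identity for $(b_n)$ whenever $0 \leq n \leq s-2$ (there every index involved is at most $s-1$), and for $n \geq s-1$ all three terms vanish so the identity is trivial.

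The converse (``moreover'') is the heart of the statement, and I would again compare with the infinite sequence. Let $(a_n)$ be finitely supported, fixed by $\mathcal{L}$, with no internal zeros; then its support is $\{0,1,\ldots,d\}$ and the $n=0$ and $n=d$ instances of \eqref{eq:recLfix} give $a_0 = a_d = 1$. Put $k = a_1$ and let $(b_n)$ be the sequence of Theorem~\ref{thm:fixL}. Since fixed sequences obey $a_{n+1} = a_n(a_n-1)/a_{n-1}$ whenever $a_{n-1}\neq 0$, an induction gives $a_n = b_n$ for $0 \leq n \leq d$, whence $b_{d+1} = a_{d+1} = 0$, and then \eqref{eq:recLfix} for $(b_n)$ at $n=d+1$ together with $b_d = 1$ yields $b_{d+2}=0$. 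At this point I would switch from the nonlinear recurrence to the \emph{linear} four-term recurrence $b_n = k(b_{n-1}-b_{n-2}) + b_{n-3}$, valid for all $n \geq 1$ because $(b_n)$ is $C$-finite with denominator $1 - kx + kx^2 - x^3$. Running it forward from $b_d = 1$, $b_{d+1}=b_{d+2}=0$ gives $b_{d+3}=1=b_0$, $b_{d+4}=k=b_1$, $b_{d+5}=k(k-1)=b_2$; as the recurrence has order $3$, these three consecutive matches force $b_{n+s}=b_n$ for all $n$ with $s = d+3$, so $(b_n)$ is periodic of period dividing $s$.

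It then remains to convert periodicity into the trigonometric parametrisation. A periodic $C$-finite sequence is bounded, so its characteristic roots lie on the unit circle; the degenerate real cases $k=3$ and $k=-1$ are ruled out since they produce polynomial (unbounded) growth, so the roots of $1-(k-1)x+x^2$ form a genuine conjugate pair $e^{\pm i\phi}$ with $\cos\phi = (k-1)/2$ and $\phi \in (0,\pi)$. Periodicity of period $s$ forces $e^{i\phi}$ to be an $s$-th root of unity, i.e. $\phi = 2\pi r/s$ with $1 \leq r < s/2$ (so automatically $s \neq 2r$), giving $k = 1 + 2\cos(2\pi r/s)$. Reading off one period shows $\sum_n b_n x^n = A(x)/(1-x^s)$ with $A(x) = \sum_{n=0}^d a_n x^n$, which rearranges to $A = p_{s,r}$. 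I expect the main obstacle to be exactly this converse: one must make the passage to the infinite companion sequence and then argue that two consecutive interior zeros, fed into the linear recurrence, force the period to be \emph{exactly} $d+3$ and hence the root-of-unity condition; the boundary bookkeeping and the exclusion of the degenerate roots $k=\pm$ are the points requiring care.
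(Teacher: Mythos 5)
Your proof is correct, and its skeleton is the paper's: both directions are run through the companion infinite sequence $(b_n)$ of Theorem~\ref{thm:fixL}; for the converse, the finite sequence is identified with an initial segment of $(b_n)$, the two forced zeros $b_{d+1}=b_{d+2}=0$ are fed into the linear four-term recurrence to get periodicity $b_{n+s}=b_n$ with $s=d+3$, and one period is read off as $p_{s,r}$ (your forward direction spells out what the paper compresses into ``the claims follow from here''). The one place you genuinely diverge is how the root-of-unity parametrisation is extracted. The paper passes directly from periodicity to the identity $\bigl(\sum_n b_n x^n\bigr)(1-x^s)=A(x)$, i.e.\ $A(x)\,(1-x)\,\bigl(1-(k-1)x+x^2\bigr)=1-x^s$, so by divisibility the roots of $1-(k-1)x+x^2$ are nontrivial $s$-th roots of unity, and the form $e^{\pm 2\pi i r/s}$ with $s\neq 2r$ is automatic because $1-x^s$ has simple roots. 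You instead argue ``periodic $\Rightarrow$ bounded $\Rightarrow$ characteristic roots on the unit circle'' and exclude the degenerate cases $k=3$, $k=-1$. That route works but hides a step: boundedness of a $C$-finite sequence constrains only those characteristic roots that appear with nonzero coefficient in the Binet expansion, so excluding, say, $k=4$ (real roots off the unit circle) rests on knowing those coefficients are nonzero --- true for three distinct roots by the partial-fraction formula \eqref{eq:L2:an}, but it needs saying; ruling out only the double-root values $k=3,-1$ does not by itself cover the real distinct-root cases. Since you derive the identity $\sum_n b_n x^n = A(x)/(1-x^s)$ at the end anyway, the cleaner ordering is the paper's: derive it immediately after periodicity and let divisibility by the squarefree polynomial $1-x^s$ produce the conjugate pair of roots of unity, which eliminates both the boundedness argument and the degenerate-case analysis.
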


\begin{proof}
  Assume that $(a_n)$ is one of the sequences of Theorem~\ref{thm:fixL} with
  the property that there is $N \geq 0$ such that $a_{N + 1} = 0$. Choose
  the minimal such $N$. Then
  \begin{equation*}
    (a_n)_{n \geq 0} = (1, a_1, \ldots, a_{N - 1}, 1, 0, 0, 1, a_1,
     \ldots) .
  \end{equation*}
  Conversely, consider a finite sequence $(a_n)_{n = 0}^N$ of nonzero terms
  which is fixed by $\mathcal{L}$ (so that, in particular, $a_0 = a_N = 1$).
  Then $p (x) = a_0 + a_1 x + \ldots + a_N x^N$ necessarily satisfies
  \begin{equation*}
    \frac{p (x)}{1 - x^{N + 3}} = \frac{1}{(1 - x) (1 - (a_1 - 1) x + x^2)} .
  \end{equation*}
  It follows that $1 - (a_1 - 1) x + x^2 = (x - \zeta_1) (x - \zeta_2)$ where
  $\zeta_1, \zeta_2$ are nontrivial $s$-th roots of unity, with $s = N + 3$.
  In fact, one clearly has $\zeta_2 = \zeta_1^{- 1} = \overline{\zeta_1}$.
  Writing $\zeta_1 = \exp (2 \pi i r / s)$ for some $r$, we thus find
  \begin{equation*}
    a_1 - 1 = \zeta_1 + \overline{\zeta_1} = 2 \cos \left( \frac{2 \pi r}{s}
     \right) .
  \end{equation*}
  The claims follow from here.
\end{proof}

We observe that the polynomials $p_{s, r} (x)$ are palindromic, that is, their
coefficients form symmetric sequences. This property follows from $x^{s - 3}
p_{s, r} (1 / x) = p_{s, r} \left( x \right)$ which is readily verified from
\eqref{eq:psr}. While the coefficients of these polynomials are not in general
positive, we now show they are positive in the case $r = 1$. In this case, the
coefficients therefore form an interesting family of sequences which are
$\infty$-log-concave; see Example~\ref{eg:ps1} which lists the first few
sequences explicitly.

\begin{theorem}
  \label{thm:ps1}Let $s \geq 3$ be an integer. The polynomials
  \begin{equation}
    p_s (x) = p_{s, 1} (x) = \frac{1 - x^s}{(1 - x) (1 - 2 \cos (2 \pi / s) x
    + x^2)} \label{eq:ps1}
  \end{equation}
  have the following properties:
  \begin{enumeratealpha}
    \item \label{l:ps:palin}The polynomial $p_s (x)$ is palindromic of degree
    $s - 3$.
    
    \item \label{l:ps:pos}The coefficients of $p_s (x)$ are positive.
    
    \item \label{l:ps:fix}The coefficients of $p_s (x)$ are fixed by
    $\mathcal{L}$.
    
    \item \label{l:ps:inf}The coefficients of $p_s (x)$ are
    $\infty$-log-concave.
    
    \item \label{l:ps:r}The coefficients of $p_s (x)$ are $r$-factor
    log-concave if and only if
    \begin{equation*}
      r \leq \left\{ \begin{array}{ll}
         \frac{1}{\cos (2 \pi / s)}, & \text{if $s$ is even,}\\
         \frac{1}{[1 - 2 \cos (\pi / s)]^2}, & \text{if $s$ is odd.}
       \end{array} \right.
    \end{equation*}
  \end{enumeratealpha}
\end{theorem}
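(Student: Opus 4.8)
The plan is to handle the five parts in dependency order. Parts (a), (c), (d) are essentially immediate from what precedes, while the substance lies in (b) and (e), both of which I would settle at once by first producing an explicit closed form for the coefficients of $p_s$. For (a), the degree statement is the $r=1$ instance of Proposition~\ref{prop:psr:fix} (legitimate since $s\neq 2$ whenever $s\geq 3$), and palindromicity is the identity $x^{s-3}p_s(1/x)=p_s(x)$ read off from \eqref{eq:ps1}. Part (c) is likewise the $r=1$ case of Proposition~\ref{prop:psr:fix}. Granting (b), part (d) is then free: a nonnegative sequence fixed by $\mathcal{L}$ satisfies $\mathcal{L}^j(a_n)=(a_n)\geq 0$ for every $j$, exactly as in Example~\ref{eg:binomfix}.

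The engine for (b) and (e) is a closed form for the coefficients $a_n$ of $p_s$. Since $p_s(x)\,(1-x)(1-2\cos(2\pi/s)x+x^2)=1-x^s$ and $\deg p_s=s-3$, the $a_n$ with $0\leq n\leq s-3$ coincide with the Taylor coefficients of $1/[(1-x)(1-2\cos(2\pi/s)x+x^2)]$. A partial-fraction decomposition over the three linear factors $1-x$, $1-\omega x$, $1-\bar\omega x$ (with $\omega=e^{2\pi i/s}$) writes $a_n=A+2\Re(B\omega^n)$; collapsing the resulting cosines with product-to-sum formulas yields
\[
  a_n=\frac{\sin\frac{(n+1)\pi}{s}\,\sin\frac{(n+2)\pi}{s}}{\sin\frac{\pi}{s}\,\sin\frac{2\pi}{s}}.
\]
Part (b) is then immediate: for $0\leq n\leq s-3$ both numerator arguments lie in $(0,\pi)$, so each $a_n>0$.

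For (e), I would exploit that $(a_n)$ is fixed by $\mathcal{L}$, so $a_n^2=a_{n-1}a_{n+1}+a_n$ and hence $a_n^2/(a_{n-1}a_{n+1})=1+a_n/(a_{n-1}a_{n+1})$. The sequence is $r$-factor log-concave exactly when $r$ is at most this quotient for every index, so the sharp threshold is its minimum over the interior indices $1\leq n\leq s-4$ (the two boundary conditions are vacuous, a neighbor being zero), which gives the ``if and only if''. The closed form telescopes the quotient: writing $s_j=\sin(j\pi/s)$, one finds $a_n/(a_{n-1}a_{n+1})=s_1 s_2/(s_n s_{n+3})$, so everything reduces to \emph{maximizing} $s_n s_{n+3}=\tfrac12[\cos\frac{3\pi}{s}-\cos\frac{(2n+3)\pi}{s}]$ over $1\leq n\leq s-4$. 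This is largest when $(2n+3)\pi/s$ is nearest to $\pi$, i.e.\ when $2n+3$ is closest to $s$: for $s$ odd the value $2n+3=s$ is attained (at $n=(s-3)/2$), pushing the cosine to $-1$, whereas for $s$ even the best available is $2n+3=s\pm1$, giving $-\cos(\pi/s)$. Substituting back produces the two advertised thresholds.

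The main obstacle is the final trigonometric simplification, particularly the odd case. There one is left with $r=1+s_1s_2/\cos^2\frac{3\pi}{2s}$, and the identity to recognize is $\cos^2\frac{3\pi}{2s}+\sin\frac{\pi}{s}\sin\frac{2\pi}{s}=\cos^2\frac{\pi}{2s}$, followed by the factorization $\cos\frac{\alpha}{2}(2\cos\alpha-1)=\cos\frac{3\alpha}{2}$ with $\alpha=\pi/s$; together these collapse $r$ to $1/[1-2\cos(\pi/s)]^2$. The even case is smoother, reducing through the identity $1+\tan\frac{\pi}{s}\tan\frac{2\pi}{s}=1/\cos\frac{2\pi}{s}$. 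Finally I would note that the degenerate cases $s=3,4$ (empty interior range, so $r$-factor log-concavity holds for all $r$) are consistent, as both formulas then return $+\infty$.
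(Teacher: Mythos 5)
Your proposal is correct, and for the two substantive parts, \ref{l:ps:pos} and \ref{l:ps:r}, it takes a genuinely different technical route from the paper. The paper never derives your closed form $a_n = \sin\tfrac{(n+1)\pi}{s}\sin\tfrac{(n+2)\pi}{s}\,/\,(\sin\tfrac{\pi}{s}\sin\tfrac{2\pi}{s})$; instead, for \ref{l:ps:pos} it expands $p_s$ via the Chebyshev generating function $1/(1-2\cos(\theta)x+x^2)=\sum \sin((n+1)\theta)x^n/\sin\theta$, observes that each $a_j$ with $j\leq\lfloor(s-3)/2\rfloor$ is a positive combination of sines $\sin(2\pi(n+1)/s)$ with arguments below $\pi$, and covers the remaining coefficients by palindromicity. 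For \ref{l:ps:r} the paper also starts from the fixed-point identity, rewriting the threshold as $a_n^2/(a_{n-1}a_{n+1})=a_n/(a_n-1)$, but then locates its minimum structurally: since $x\mapsto x/(x-1)$ is decreasing, the minimum occurs at the maximal coefficient, which by symmetry and unimodality (log-concavity having just been established) is the middle one $a_N$, $N=\lfloor(s-3)/2\rfloor$; it then computes $a_N-1$ by summing a geometric series of roots of unity and finishes with Chebyshev-polynomial identities. Your route replaces this structural localization with a uniform computation: the product formula telescopes $a_n/(a_{n-1}a_{n+1})=s_1s_2/(s_ns_{n+3})$ at every index, so the sharp constant becomes an explicit discrete maximization of $\tfrac12[\cos\tfrac{3\pi}{s}-\cos\tfrac{(2n+3)\pi}{s}]$, settled by noting which odd integer $2n+3$ can come closest to $s$. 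I checked your three trigonometric identities (the even-case $1+\tan\tfrac{\pi}{s}\tan\tfrac{2\pi}{s}=1/\cos\tfrac{2\pi}{s}$, and the odd-case pair $\cos^2\tfrac{3\pi}{2s}+\sin\tfrac{\pi}{s}\sin\tfrac{2\pi}{s}=\cos^2\tfrac{\pi}{2s}$ and $\cos\tfrac{\alpha}{2}(2\cos\alpha-1)=\cos\tfrac{3\alpha}{2}$), and all are valid, as is your treatment of the vacuous boundary inequalities and the degenerate cases $s=3,4$. What your approach buys is economy: \ref{l:ps:pos} becomes a one-liner and no appeal to unimodality or to the location of the maximum is needed; what the paper's approach buys is that it avoids deriving and verifying a closed form altogether, confining all computation to the single middle coefficient and reusing structural facts (palindromicity, log-concavity) already in hand.
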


\begin{proof}
  Part \ref{l:ps:palin} is a special case of the palindromicity of the
  polynomials $p_{s, r} (x)$ which was observed above. Note that claim
  \ref{l:ps:fix} was proved in Proposition \ref{prop:psr:fix}. Together with
  the positivity claimed in part \ref{l:ps:pos}, this implies the
  $\infty$-log-concavity of part \ref{l:ps:inf}. It therefore only remains to
  show parts \ref{l:ps:pos} and \ref{l:ps:r}.
  
  Let us prove \ref{l:ps:pos}. Since the polynomial $p_s (x) = a_0 + a_1 x +
  \ldots + a_{s - 3} x^{s - 3}$ is palindromic of degree $s - 3$, it is
  sufficient to show that $a_j > 0$ for $j = 0, 1, \ldots, k$ where $k =
  \lfloor (s - 3) / 2 \rfloor$. We recall the classical generating function
  \begin{equation*}
    \frac{1}{1 - 2 z x + x^2} = \sum_{n = 0}^{\infty} U_n (z) x^n
  \end{equation*}
  of the Chebyshev polynomials $U_n (z)$ of the second kind. Since $U_n (\cos
  (\theta)) = \frac{\sin ((n + 1) \theta)}{\sin (\theta)}$, we have
  \begin{equation*}
    \frac{1}{1 - 2 \cos (\theta) x + x^2} = \frac{1}{\sin (\theta)} \sum_{n =
     0}^{\infty} \sin ((n + 1) \theta) x^n
  \end{equation*}
  and hence
  \begin{equation}
    p_s (x) = \frac{1 + x + \ldots + x^{s - 1}}{\sin (2 \pi / s)} \sum_{n =
    0}^{\infty} \sin (2 \pi (n + 1) / s) x^n . \label{eq:ps1sin}
  \end{equation}
  Note that $\sin (2 \pi / s) > 0$. The coefficient $a_j$ therefore is a
  positive linear combination of $\sin (2 \pi (n + 1) / s)$ for $n = 0, 1,
  \ldots, j$. On the other hand, for $n = 0, 1, \ldots, k$,
  \begin{equation*}
    \frac{2 \pi (n + 1)}{s} \leq \frac{2 \pi (k + 1)}{s} \leq
     \frac{\pi (s - 1)}{s} < \pi
  \end{equation*}
  and hence $\sin (2 \pi (n + 1) / s) > 0$. Therefore, the coefficients of
  $p_s (x)$ are positive as claimed in \ref{l:ps:pos}.
  
  To show part \ref{l:ps:r}, recall that $(a_n)$ is $r$-factor log-concave if
  and only if $a_n^2 - r a_{n + 1} a_{n - 1} \geq 0$ for all $n = 1, 2,
  \ldots, s - 4$. Using the fact that $a_n^2 - a_{n + 1} a_{n - 1} = a_n$,
  this inequality is clearly equivalent to
  \begin{equation*}
    r \leq \frac{a_n^2}{a_{n + 1} a_{n - 1}} = \frac{a_n}{a_n - 1} .
  \end{equation*}
  We have already shown that the coefficients $a_n$ are positive and fixed by
  $\mathcal{L}$. Hence they are log-concave and, as a consequence, unimodal.
  Because the sequence $(a_n)$ is symmetric and unimodal, its maximum is
  $a_N$, with $N = \lfloor (s - 3) / 2 \rfloor$, and it follows that $(a_n)$
  is $r$-factor log-concave if and only if $r \leq \frac{a_N}{a_N - 1}$.
  
  The value $a_N$ can be obtained from the expansion \eqref{eq:ps1sin}.
  Indeed, writing $\zeta_s = e^{2 \pi i / s}$, we find that, for $n = 0, 1,
  \ldots, s - 3$,
  \begin{equation*}
    a_n - 1 = \frac{1}{\sin (2 \pi / s)} \Im \sum_{j = 1}^n \zeta_s^{j
     + 1} = \frac{1}{\sin (2 \pi / s)} \Im \left[ \zeta_s^2  \frac{1 -
     \zeta_s^n}{1 - \zeta_s} \right] .
  \end{equation*}
  A simple calculation shows that
  \begin{eqnarray*}
    \Im \left[ \zeta_s^2  \frac{1 - \zeta_s^n}{1 - \zeta_s} \right] & =
    & \Im \left[ \frac{- \zeta_s + \zeta_s^2 + \zeta_s^{n + 1} -
    \zeta_s^{n + 2}}{|1 - \zeta_s |^2} \right]\\
    & = & \frac{- \sin ( \frac{2 \pi}{s}) + \sin ( \frac{4 \pi}{s}) + \sin (
    \frac{2 \pi (n + 1)}{s}) - \sin ( \frac{2 \pi (n + 2)}{s})}{2 - 2 \cos (2
    \pi / s)} .
  \end{eqnarray*}
  In the case when $n$ is $N = \lfloor (s - 3) / 2 \rfloor$, the sines can all
  be expressed with arguments in terms of $s$ only, and one obtains
  \begin{align*}
    a_N - 1 = \frac{1}{\sin (2 \pi / s)} \begin{cases}
      \frac{\sin (4 \pi / s)}{2 - 2 \cos (2 \pi / s)}, & \text{if $s$ is
      even,}\\
      \frac{2 \sin (\pi / s) - \sin (2 \pi / s) + \sin (4 \pi / s)}{2 - 2
      \cos (2 \pi / s)}, & \text{if $s$ is odd.}
    \end{cases}
  \end{align*}
  Using $\frac{a_N}{a_N - 1} = 1 + \frac{1}{a_N - 1}$, it is now
  straightforward to verify the claim using standard trigonometric identities.
  Indeed, since $\cos(2t)=2\cos(t)^2 - 1$, we have, by definition of the Chebyshev polynomials $U_n$,
  \begin{align*}
    a_N - 1 = \frac{1}{4 (1-t^2) U_1(t)} \begin{cases}
      U_3(t), & \text{if $s$ is even,}\\
      2U_0(t) - U_1(t) + U_3(t), & \text{if $s$ is odd,}
    \end{cases}
  \end{align*}
  with $t=\cos(\pi/s)$, from which the claim is immediate.
\end{proof}

\begin{example}
  \label{eg:ps1}The first few polynomials of Theorem~\ref{thm:ps1} are $p_3
  (x) = 1$, $p_4 (x) = 1 + x$, as well as
  \begin{eqnarray*}
    p_5 (x) & = & 1 + \tfrac{1 + \sqrt{5}}{2} x + x^2,\\
    p_6 (x) & = & 1 + 2 x + 2 x^2 + x^3,\\
    p_7 (x) & = & 1 + (1 + \alpha) x + \alpha (1 + \alpha) x^2 + (1 + \alpha)
    x^3 + x^4, \quad \alpha = 2 \sin \left( \tfrac{3 \pi}{14}
    \right),\\
    p_8 (x) & = & 1 + (1 + \sqrt{2}) x + (2 + \sqrt{2}) x^2 + (2 + \sqrt{2})
    x^3 + (1 + \sqrt{2}) x^4 + x^5 .
  \end{eqnarray*}
  In each case, positivity of the coefficients, together with the fact that
  they are fixed under $\mathcal{L}$, shows that they are
  $\infty$-log-concave. None of these polynomials has all their roots on the
  negative real axis. Hence, it is not possible to deduce the
  $\infty$-log-concavity of their coefficients using Br\"and\'en's result in
  form of Corollary~\ref{cor:realrootslc}.
\end{example}

\begin{remark}
  \label{rk:rtoolow}Note that $1 / \cos (2 \pi / s) \rightarrow 1$ as $s
  \rightarrow \infty$. Part \ref{l:ps:r} of Theorem~\ref{thm:ps1} therefore
  shows that, given any $\varepsilon > 0$, there is a finite sequence $(a_n)$
  which is $\infty$-log-concave but not $(1 + \varepsilon)$-factor
  log-concave.
  
  In particular, let $(a_n)$ be the coefficients of the polynomial $p_s (x)$
  with $s \geq 6$. The above shows that $(a_n)$ is fixed by $\mathcal{L}$
  but is not $r$-factor log-concave for $r > 2$. In particular, there is no $m
  > 0$ such that $\mathcal{L}^m (a_n)$ is $r$-factor log-concave for $r > 2$.
  It is therefore not possible to apply Lemma~\ref{lem:rfactor} as in
  Example~\ref{eg:rfactorinfty} to show that the sequence $ (a_n)$ is
  $\infty$-log-concave.
\end{remark}

Apart from the special case discussed in Remark~\ref{rk:rtoolow}, we have been
able to successfully apply the approach of Example~\ref{eg:rfactorinfty} to
establish $\infty$-log-concavity in all the examples we have encountered. This
motivates the next question which, in particular, asks whether
$\infty$-log-concavity of a finite sequence is decidable.

\begin{question}
  \label{q:lc}Given a finite positive sequence $(a_n)$, compute $\mathcal{L}^m
  (a_n)$ for $m = 1, 2, \ldots$ until either
  \begin{enumeratealpha}
    \item \label{l:neg}$\mathcal{L}^m (a_n)$ has negative terms, or
    
    \item \label{l:inff}$\mathcal{L}^m (a_n) = \lambda (a_n)$ for some
    $\lambda > 0$, or
    
    \item \label{l:infr}$\mathcal{L}^m (a_n)$ is $r$-factor log-concave for
    some $r \geq \frac{3 + \sqrt{5}}{2} \approx 2.618$.
  \end{enumeratealpha}
  In the case \ref{l:neg} the sequence is $(m - 1)$-log-concave but not
  $m$-log-concave, and in the cases \ref{l:inff} and \ref{l:infr} the sequence
  is $\infty$-log-concave.
  
  Does this simple algorithm always terminate? Or, if this is not the case, is
  there some other algorithm which determines, in finite time, whether a given
  finite sequence is $\infty$-log-concave?
\end{question}

\section{Sequences fixed by $\mathcal{L}^2$}\label{sec:fixedbyL2}

We now consider an analog of Theorem~\ref{thm:fixL} which characterizes
sequences fixed by $\mathcal{L}^2$. No such characterization appears to exist
for sequences fixed by $\mathcal{L}^n$ with $n > 2$; see
Example~\ref{eg:fixL3}. Note that the rational generating function in
Theorem~\ref{thm:fixL2} is equivalent to the sequence $(a_n)$ satisfying the
recurrence
\begin{equation*}
  a_n - \beta a_{n - 1} + (\beta^2 - \gamma) a_{n - 2} - a_{n - 3} = 0,
\end{equation*}
with initial conditions $a_0 = 1$ and $a_{- 1} = a_{- 2} = a_{- 3} = 0$.

\begin{theorem}
  \label{thm:fixL2}Let $\beta, \gamma$ be arbitrary numbers. Then the sequence
  $(a_n)_{n \geq 0}$ defined by
  \begin{equation*}
    \sum_{n \geq 0} a_n x^n = \frac{1}{1 - \beta x + (\beta^2 - \gamma)
     x^2 - x^3}
  \end{equation*}
  is fixed by $\mathcal{L}^2$. Note that $a_0 = 1$, $a_1 = \beta$, $a_2 =
  \gamma$.
\end{theorem}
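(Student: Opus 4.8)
The plan is to reduce everything to a \emph{swap lemma}: that $\mathcal{L}$ sends the sequence with generating function $1/(1 - Px + Qx^2 - x^3)$ to the sequence with generating function $1/(1 - Qx + Px^2 - x^3)$, interchanging the two middle coefficients. The sequence of the theorem is the case $P = \beta$, $Q = \beta^2 - \gamma$, so applying $\mathcal{L}$ twice would carry $(P, Q)$ to $(Q, P)$ and back to $(P, Q)$, recovering the original generating function and hence proving $\mathcal{L}^2 (a_n) = a_n$. This picture is consistent with Theorem~\ref{thm:fixL}: the sequences fixed by $\mathcal{L}$ itself are exactly the diagonal $P = Q$, i.e. $\gamma = \beta^2 - \beta$, on which the swap is trivial.

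To prove the swap lemma I would argue spectrally. The rational generating function is equivalent to the recurrence $a_n - P a_{n - 1} + Q a_{n - 2} - a_{n - 3} = 0$ for $n \geq 1$, so, when the roots are distinct, $a_n = \sum_{i = 1}^3 A_i \lambda_i^n$ with $\lambda_1, \lambda_2, \lambda_3$ the roots of $\lambda^3 - P \lambda^2 + Q \lambda - 1$. The decisive point is the Vieta relation $\lambda_1 \lambda_2 \lambda_3 = 1$ forced by the constant term. A short symmetrization then yields
\[
  \mathcal{L} (a_n) = a_n^2 - a_{n - 1} a_{n + 1} = - \sum_{i < j} A_i A_j (\lambda_i \lambda_j)^{n - 1} (\lambda_i - \lambda_j)^2 ,
\]
so $b_n := \mathcal{L}(a_n)$ is $C$-finite with characteristic roots $\lambda_i \lambda_j$. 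Because $\lambda_1 \lambda_2 \lambda_3 = 1$, these products are exactly the reciprocals $1 / \lambda_k$, whose elementary symmetric functions are $Q, P, 1$ rather than $P, Q, 1$. Hence $(b_n)$ obeys the reversed recurrence $b_n - Q b_{n - 1} + P b_{n - 2} - b_{n - 3} = 0$, and checking the initial values $b_0 = 1$, $b_1 = Q$, $b_2 = Q^2 - P$ against the type-$(Q, P)$ sequence identifies the two, giving the swap lemma.

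The step I expect to be delicate is the rigor in degenerate cases: the spectral formula presumes distinct nonzero roots, and the convention $a_{-1} = 0$ must be checked to affect only the boundary index $n = 0$ (handled by the direct evaluation $b_0 = 1$). Both points can be absorbed by noting that, for each fixed $n$, the asserted equality $\mathcal{L}^2 (a_n) = a_n$ is a polynomial identity in $\beta$ and $\gamma$; since it holds on the Zariski-dense locus where $\lambda^3 - P \lambda^2 + Q \lambda - 1$ has distinct roots, it holds identically. Alternatively, and bypassing any case analysis, one can invoke the $C$-finite ansatz of Remark~\ref{rk:cfinite}: over $\mathbb{Q}(\beta, \gamma)$ the sequences $(a_n^2)$ and $(a_{n - 1} a_{n + 1})$, hence $b_n$ and $\mathcal{L}(b_n) = b_n^2 - b_{n - 1} b_{n + 1}$, are $C$-finite of bounded order, so $\mathcal{L}^2 (a_n) = a_n$ follows from a finite, mechanical check of initial terms.
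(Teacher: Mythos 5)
Your proposal is correct and takes essentially the same route as the paper's proof: both rest on the spectral (partial-fraction) form of $a_n$ and the key observation that, since the characteristic roots have product $1$, applying $\mathcal{L}$ replaces each root by its reciprocal, so that $\mathcal{L}^2$ restores the original sequence. Your ``swap lemma'' $(P,Q) \mapsto (Q,P)$ is exactly the generating-function formulation of the paper's displayed identity for $a_n^2 - a_{n-1} a_{n+1}$, and your Zariski-density treatment of repeated roots plays the same role as the paper's limiting argument.
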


\begin{proof}
  Let $\alpha_1, \alpha_2, \alpha_3$ be such that
  \begin{equation*}
    1 - \beta x + (\beta^2 - \gamma) x^2 - x^3 = (\alpha_1 - x) (\alpha_2 -
     x) (\alpha_3 - x) .
  \end{equation*}
  Necessarily, $\alpha_1 \alpha_2 \alpha_3 = 1$. In the sequel, we assume that
  $\alpha_1, \alpha_2, \alpha_3$ are distinct; the general case may then be
  obtained by a limiting argument. Using the general partial fraction
  expansion
  \begin{equation*}
    \prod_{j = 1}^d \frac{1}{\alpha_j - x} = \sum_{k = 1}^d \frac{1}{\alpha_k
     - x} \prod_{\substack{
       j = 1\\
       j \neq k
     }}^d \frac{1}{\alpha_j - \alpha_k},
  \end{equation*}
  valid for distinct $\alpha_j$, we obtain the expansion
  \begin{equation*}
    \prod_{j = 1}^d \frac{1}{\alpha_j - x} = \sum_{n = 0}^{\infty} x^n
     \sum_{k = 1}^d \frac{1}{\alpha_k^{n + 1}}
     \prod_{\substack{
       j = 1\\
       j \neq k
     }}^d \frac{1}{\alpha_j - \alpha_k} .
  \end{equation*}
  In the case $d = 3$, we thus have
  \begin{equation}
    a_n = \frac{1}{(\alpha_1 - \alpha_2) (\alpha_2 - \alpha_3) (\alpha_3 -
    \alpha_1)} \left[ \frac{\alpha_3 - \alpha_2}{\alpha_1^{n + 1}} +
    \frac{\alpha_1 - \alpha_3}{\alpha_2^{n + 1}} + \frac{\alpha_2 -
    \alpha_1}{\alpha_3^{n + 1}} \right] . \label{eq:L2:an}
  \end{equation}
  By a straight-forward direct computation, using $\alpha_1 \alpha_2 \alpha_3
  = 1$, we find that
  \begin{equation*}
    a_n^2 - a_{n - 1} a_{n + 1} = \frac{1}{(\alpha_1 - \alpha_2) (\alpha_2 -
     \alpha_3) (\alpha_3 - \alpha_1)} \left[ \frac{\alpha_3 -
     \alpha_2}{\alpha_1^{- n - 2}} + \frac{\alpha_1 - \alpha_3}{\alpha_2^{- n
     - 2}} + \frac{\alpha_2 - \alpha_1}{\alpha_3^{- n - 2}} \right] .
  \end{equation*}
  Comparing this expression for $\mathcal{L} (a_n)$ with \eqref{eq:L2:an} for
  $(a_n)$, it becomes clear that iterating this computation to compute
  $\mathcal{L}^2 (a_n)$ will result in the original sequence. In other words,
  we have shown that $\mathcal{L}^2 (a_n) = (a_n)$, as claimed.
  
  An alternative and automatic, but arguably less illuminating and rather more
  computational, proof can be obtained by applying the $C$-finite ansatz
  \cite{zeilberger-cfinite} as indicated in Remark~\ref{rk:cfinite}.
\end{proof}

\begin{example}
  If $\beta = 2$ and $\gamma = 3$, then we obtain the sequence $(a_n)$ whose
  first few terms are given by
  \begin{equation*}
    1, 2, 3, 5, 9, 16, 28, 49, 86, 151, 265, \ldots
  \end{equation*}
  Indeed, this sequence \cite[A005314]{oeis} may be written as
  \begin{equation*}
    a_{n - 1} = \sum_{k = 0}^{\lfloor (n - 1) / 3 \rfloor} \binom{n - k}{2 k
     + 1},
  \end{equation*}
  and $a_{n - 1}$ counts the number of compositions of $n$ into parts
  congruent to $1$ or $2$ modulo $4$. Theorem~\ref{thm:fixL2} shows that this
  sequence is fixed by $\mathcal{L}^2$. However, $\mathcal{L} (a_n) = (1, 1, -
  1, - 2, 1, 4, \ldots)$ is not positive, so $(a_n)$ is not even log-concave.
\end{example}

\begin{example}
  \label{eg:fixL3}After the success of Theorems~\ref{thm:fixL} and
  \ref{thm:fixL2}, one may be tempted to hope that similar characterizations
  exist for fixed sequences of $\mathcal{L}^m$ when $m > 2$. This does not
  seem to be the case, as is illustrated by the following example for $m = 3$.
  The sequence $(a_n)$, characterized by $a_0 = 1$, $a_1 = 2$, $a_2 = 5$, $a_3
  = 9$ together with the fact that it is fixed by $\mathcal{L}^3$, is given by
  \begin{equation*}
    1, 2, 5, 9, \frac{96}{5}, \frac{324547}{9450}, \frac{4079971657981}{58296
     672000}, \ldots
  \end{equation*}
  Computing several more terms, one can check that $(a_n)$ cannot satisfy a
  linear recurrence of small degree and small order. For instance, we have found that
  it does not satisfy a recurrence with constant coefficients of order up to
  $10$ (recall that in the cases $m = 1$ and $m = 2$ any such sequence
  satisfies a recurrence with constant coefficients of order $3$). We also
  found that $(a_n)$ does not satisfy a recurrence with linear coefficients of
  order up to $6$, or a recurrence with quadratic coefficients of order up to
  $4$.
\end{example}

In all the examples we have considered of sequences that are fixed by
$\mathcal{L}^3$ with initial integral values as in Example~\ref{eg:fixL3}, the
resulting sequences involved fractions of rapidly increasing size. We are
therefore lead to wonder whether there are examples of integer sequences fixed
by $\mathcal{L}^3$ (but not by $\mathcal{L}$).

\begin{question}
  For $n > 2$, are there (positive) integer sequences that are fixed by
  $\mathcal{L}^n$, but not by $\mathcal{L}^m$ for any $1 \leq m < n$?
\end{question}

\section{Convolutions of sequences}\label{sec:powers}

It is a well-known result of Hoggar \cite{hoggar-lc} that the product of
polynomials with positive and log-concave coefficients again has log-concave
coefficients. This is generalized in \cite{johnson-lc} by Johnson and
Goldschmidt who apply their result to show, for instance, log-concavity of the
Stirling numbers of the second kind as a sequence in the second parameter.

This naturally leads one to wonder whether anything interesting can be said
about the log-concavity of the coefficients of the product $p (x) \cdot q (x)$
if $p (x)$ and $q (x)$ have $m$- and $n$-log-concave coefficients,
respectively. The next example, constructed as an application of the previous
sections, dampens excessive expectations by demonstrating that the product of
two polynomials with $\infty$-log-concave coefficients may fail to be
$5$-log-concave.

\begin{example}
  \label{eg:fail}Consider the polynomial
  \begin{equation}
    p (x) = 1 + (1 + \sqrt{2}) x + (2 + \sqrt{2}) x^2 + (2 + \sqrt{2}) x^3 +
    (1 + \sqrt{2}) x^4 + x^5 \label{eq:p81}
  \end{equation}
  from Example~\ref{eg:ps1}. Since it is fixed under $\mathcal{L}$, its
  coefficients are $\infty$-log-concave. However, the coefficients of $p
  (x)^2$ are $4$-log-concave but not $5$-log-concave. On the other hand, the
  coefficients of $p (x)^n$ for $n = 3, 4, \ldots, 50$ are again
  $\infty$-log-concave.
  
  We remark that slight perturbations of the polynomial $p (x)$ in
  \eqref{eq:p81} can be used to construct many further examples of polynomials
  with similar properties but which are not fixed under $\mathcal{L}$. For
  instance, the degree $6$ polynomial $q (x) = p (x) + (x / 4)^6$ has the
  property that its coefficients are $5$-log-concave, while the coefficients
  of $q (x)^2$ are only $4$-log-concave but not $5$-log-concave.
\end{example}

Note that Example~\ref{eg:fail} demonstrates that, in general, if the
coefficients of $p (x)^{\lambda}$ are $m$-log-concave for some $m > 1$ this
does not imply that the coefficients of $p (x)^{\lambda + 1}$ are
$m$-log-concave as well. On the other hand, note that each polynomial $p (x) =
a_0 + a_1 x + \cdots + a_d x^d$ with positive coefficients $a_j > 0$, $j = 0,
1, \ldots, d$, induces a probability distribution on the set $\{0, 1, \ldots,
d\}$ with probability weights specified by the coefficients of $p (x)$.
Namely, a random variable $X$ is distributed according to $p (x)$, if, for $j
= 0, 1, \ldots, d$,
\begin{equation*}
  \op{Prob} (X = j) = \frac{a_j}{p (1)} .
\end{equation*}
If $X$ and $Y$ are random variables distributed according to $p (x)$ and $q
(x)$, respectively, then $X + Y$ is distributed according to the product $p
(x) q (x)$. In particular, let $X_1, \ldots, X_{\lambda}$ be independent
random variables distributed according to $p (x)$. Then their sum $X_1 +
\cdots + X_{\lambda}$ is distributed according to $p (x)^{\lambda}$. The
central limit theorem therefore suggests that the coefficients of $p
(x)^{\lambda}$ should eventually become ``more log-concave'' as $\lambda$
increases. Indeed, in all the examples we have considered, we have observed
that the coefficients of $p (x)^{\lambda}$ become $\infty$-log-concave for
finite $\lambda$.

\begin{conjecture}
  \label{conj:powers}Let $p (x)$ be a polynomial with positive coefficients,
  that is $p (x) = a_0 + a_1 x + \cdots + a_d x^d$ with $a_j > 0$ for all $j =
  0, 1 \ldots, d$. Then there exists $N$ such that, for all $\lambda \geq
  N$, $p (x)^{\lambda}$ has coefficients that are $\infty$-log-concave.
\end{conjecture}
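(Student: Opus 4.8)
The plan is to turn the central-limit heuristic into a two-phase argument: first iterate $\mathcal{L}$ enough times to concentrate the mass, and then invoke the $r$-factor criterion of Lemma~\ref{lem:rfactor}. Write $c^{(\lambda)}_n$ for the coefficients of $p(x)^{\lambda}$ and let $\mu,\sigma^2$ be the mean and variance of the probability distribution attached to $p$. A local limit theorem with an Edgeworth correction should give
\[
  c^{(\lambda)}_n = \frac{p(1)^{\lambda}}{\sqrt{2\pi\lambda\sigma^2}}\,\exp\!\Bigl(-\tfrac{(n-\lambda\mu)^2}{2\lambda\sigma^2}\Bigr)\bigl(1+E^{(\lambda)}_n\bigr),
\]
uniformly for $n$ in the bulk $|n-\lambda\mu|\le C\sqrt{\lambda\log\lambda}$, with $E^{(\lambda)}_n=O(\lambda^{-1/2})$ and with control on the discrete differences of $E^{(\lambda)}_n$. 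The relevance of the Gaussian factor $G_n=\exp(-(n-\lambda\mu)^2/(2v))$, $v=\lambda\sigma^2$, is that its shape is exactly reproduced by $\mathcal{L}$: a direct computation gives $\mathcal{L}(G_n)=(1-e^{-1/v})\,\widetilde G_n$, where $\widetilde G$ is Gaussian with half the variance $v/2$. Hence iterating $\mathcal{L}$ drives the effective variance through $v,v/2,v/4,\dots$, and after $M\approx\log_2(\lambda\sigma^2)$ steps it has dropped to $O(1)$. At that stage the bulk ratio $(a^{(M)}_n)^2/(a^{(M)}_{n-1}a^{(M)}_{n+1})=e^{\delta^{(M)}_n}$ (notation below) exceeds $\tfrac{3+\sqrt5}{2}$; if this can be shown to hold for \emph{every} index, then $\mathcal{L}^M(c^{(\lambda)})\in\mathcal{M}_{(3+\sqrt5)/2}$ and Lemma~\ref{lem:rfactor} delivers $\infty$-log-concavity from stage $M$ onward, while positivity at the finitely many earlier stages is a separate, and weaker, requirement.

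The natural object to track is the logarithmic concavity defect $\delta^{(m)}_n=2\ell^{(m)}_n-\ell^{(m)}_{n-1}-\ell^{(m)}_{n+1}$, where $\ell^{(m)}_n=\log a^{(m)}_n$ and $a^{(m)}:=\mathcal{L}^m(c^{(\lambda)})$; the $\infty$-log-concavity of $c^{(\lambda)}$ is precisely the statement that $\delta^{(m)}_n\ge 0$ for all $m$ and $n$. From $\mathcal{L}(a_n)=a_n^2(1-e^{-\delta_n})$ one obtains the exact recursion
\[
  \delta^{(m+1)}_n = 2\delta^{(m)}_n - \Delta^2 g\bigl(\delta^{(m)}_n\bigr), \qquad g(\delta)=\log(1-e^{-\delta}),
\]
where $\Delta^2 f_n=f_{n+1}-2f_n+f_{n-1}$. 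For a perfect Gaussian $\delta^{(m)}_n$ is constant in $n$, the correction $\Delta^2 g$ vanishes, and $\delta^{(m+1)}=2\delta^{(m)}$, in agreement with the halving of the variance. Writing $\delta^{(m)}_n=\bar\delta^{(m)}(1+\varepsilon^{(m)}_n)$ with $\bar\delta^{(m)}=2^m/v$ and linearizing in $\varepsilon$, a single Fourier mode $\varepsilon_n=\cos(\theta n)$ of the relative defect is multiplied by $1+g'(\bar\delta^{(m)})\,(1-\cos\theta)$, with $g'(\delta)=1/(e^{\delta}-1)>0$. In sharp contrast to a smoothing heat flow, $\mathcal{L}$ therefore acts as an \emph{anti-diffusion} on the relative defect: every nonconstant mode is amplified, the more so at high frequencies and when $\bar\delta^{(m)}$ is small.

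This anti-diffusive amplification is the crux of the problem, and the reason the statement is only a conjecture. The one favorable feature is that the local limit theorem produces an initial relative defect $\varepsilon^{(0)}$ that is extremely smooth — by the Edgeworth expansion it varies on the full scale $\sqrt{\lambda}\sigma$ of the distribution and has size $O(\lambda^{-1/2})$ — so its frequency content is concentrated near $\theta=0$, exactly where the amplification factor $1+g'(\bar\delta^{(m)})(1-\cos\theta)$ is closest to $1$. The whole argument thus reduces to a quantitative competition: one must show that these low-frequency, $O(\lambda^{-1/2})$ fluctuations remain bounded away from $-1$ after being amplified through the $M\approx\log_2(\lambda\sigma^2)$ required iterations, so that $\delta^{(m)}_n>0$ survives for every $m$. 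Making this estimate rigorous and uniform in $\lambda$ is where I expect essentially all the difficulty to concentrate, compounded by two structural gaps: the Gaussian approximation and the linearization both break down in the large-deviation tails and near the hard edges $n=0$ and $n=d\lambda$, where positivity and slow variation of $\delta^{(m)}$ must instead be extracted directly from the nonlinear recursion, and the threshold $\tfrac{3+\sqrt5}{2}$ must be crossed uniformly in $n$ — note that for $p(x)=1+x$ the $\infty$-log-concave sequence $\binom{\lambda}{n}$ of Corollary~\ref{cor:realrootslc} has minimal ratio tending to $1$, so Lemma~\ref{lem:rfactor} becomes available only after the concentrating effect of many applications of $\mathcal{L}$. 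Controlling the anti-diffusive growth across $\Theta(\log\lambda)$ nonlinear steps, including at the edges the central limit theorem never reaches, is the principal obstacle, and the point at which genuinely new ideas may be required.
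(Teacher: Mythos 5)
First, a point of orientation: the statement you were asked to prove is Conjecture~\ref{conj:powers}, and the paper contains no proof of it --- the authors support it only with numerical evidence (Table~\ref{tbl:exponents} and the surrounding examples). So the only question is whether your argument settles the conjecture, and by your own account it does not. Your preparatory computations are correct: $\mathcal{L}$ applied to a discrete Gaussian of variance $v$ does produce a Gaussian of variance $v/2$ up to the factor $1-e^{-1/v}$; the defect recursion $\delta^{(m+1)}_n = 2\delta^{(m)}_n - \Delta^2 g(\delta^{(m)}_n)$ with $g(\delta)=\log(1-e^{-\delta})$ is exact; and the linearized mode amplification factor $1+g'(\bar\delta)(1-\cos\theta)$ is right. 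The two-phase plan (iterate $\mathcal{L}$ about $\log_2(\lambda\sigma^2)$ times to bring the effective variance to $O(1)$, then hand off to Lemma~\ref{lem:rfactor}) is a sensible strategy. But everything that would carry the mathematical weight is left as a promissory note: the local limit theorem with uniform control of discrete second differences of the error is asserted (``should give''), the survival of $\delta^{(m)}_n>0$ for all $n$ through $\Theta(\log\lambda)$ nonlinear, anti-diffusive iterations --- uniformly in $\lambda$ --- is precisely the conjecture in disguise, and you offer no mechanism at all for the hard edges $n$ near $0$ and $d\lambda$, where the Gaussian approximation, the linearization, and (after even one application of $\mathcal{L}$) any invoked limit theorem cease to apply.

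Two further cautions. Your remark that positivity at the finitely many earlier stages is ``a separate, and weaker, requirement'' understates the problem: nonnegativity of $\mathcal{L}^j(c^{(\lambda)})$ for $j=0,1,\ldots,M$ is exactly $M$-log-concavity, and since $M\approx\log_2(\lambda\sigma^2)\to\infty$ with $\lambda$, this is not a priori weaker than the handoff condition --- it is the same missing estimate (your $\delta$-tracking would yield it in the bulk, but again not at the edges, which is where Table~\ref{tbl:exponents} suggests the binding constraints live). Second, the handoff itself needs the $r$-factor bound $r\geq\tfrac{3+\sqrt5}{2}$ at \emph{every} index of $\mathcal{L}^M(c^{(\lambda)})$, including the extreme ones; your own observation about $\binom{\lambda}{n}$ (minimal ratio tending to $1$) shows this cannot be extracted from bulk asymptotics alone, and nothing in the proposal replaces it. In short: the reduction is clean and the obstacles are correctly diagnosed, but no part of the quantitative core is proved, so the conjecture remains exactly as open as the paper leaves it.
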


\begin{example}
  Consider $p (x) = 1 + x + x^2$. Computer experiments suggest that the
  coefficients of $p^{\lambda}$ are $\infty$-log-concave for $\lambda
  \geq 10$. For $10 \leq \lambda \leq 500$ we have proved this
  using Lemma~\ref{lem:rfactor} as in Example~\ref{eg:rfactorinfty}. For
  instance, in the case $\lambda = 10$, one observes that the coefficients of
  $\mathcal{L}^5 [p^{10}]$ are $9.10$-factor log-concave. More generally, it
  appears that $p^{\lambda}$ is log-concave for $\lambda \geq 1$ (and strongly
  log-concave for $\lambda \geq 2$), $2$-log-concave for $\lambda \geq
  4$, $3$-log-concave for $\lambda \geq 7$, $4$-log-concave for $\lambda \geq
  8$, $5$-log-concave for $\lambda \geq 9$, and, as mentioned above,
  $\infty$-log-concave for $\lambda \geq 10$. Note that this information is
  included in Table~\ref{tbl:exponents}.
\end{example}

\begin{example}
  Consider the polynomial $p (x) = 1 + x + 2 x^2$ and note that the
  coefficients $(1, 1, 2)$ are not log-concave. However, $p^3$ has log-concave
  coefficients, $p^{10}$ has $2$-log-concave and $p^{16}$ has $3$-log-concave
  coefficients. This information is further extended in
  Table~\ref{tbl:exponents}. In particular, it turns out that $p^{23}$ has
  $\infty$-log-concave coefficients. Indeed, the coefficients of
  $\mathcal{L}^5 [p^{23}]$ are $4.23$-factor log-concave.
\end{example}

\begin{example}
  Let us consider general quadratic polynomials $p (x) = a_0 + a_1 x + a_2
  x^2$. Note that scaling a polynomial $p (x) \rightarrow \lambda p (x)$ does
  not affect questions of log-concavity, and neither does the transformation
  $p (x) \rightarrow p (\lambda x)$. Without loss of generality, we may
  therefore assume $a_0 = 1$ and $a_1 = 1$. In the case of the polynomials $p
  (x) = 1 + x + a x^2$, $a \in \{1, 2, \ldots, 6\}$, and values $m \in \{1, 2,
  \ldots, 10, \infty\}$, Table~\ref{tbl:exponents} lists the minimal exponent
  $\lambda$ such that $p^{\lambda}$ has $m$-log-concave coefficients.
  
  \begin{table}[h]
    \begin{tabular}{|c|c|c|c|c|c|c|c|c|c|c|c|}
      \hline
      & $1$ & $2$ & $3$ & $4$ & $5$ & $6$ & $7$ & $8$ & $9$ & $10$ &
      $\infty$\\
      \hline
      $1 + x + x^2$ & $1$ & $4$ & $7$ & $8$ & $9$ & $10$ & $10$ & $10$ & $10$
      & $10$ & $10$\\
      \hline
      $1 + x + 2 x^2$ & $3$ & $10$ & $16$ & $20$ & $21$ & $22$ & $23$ & $23$ &
      $23$ & $23$ & $23$\\
      \hline
      $1 + x + 3 x^2$ & $5$ & $16$ & $26$ & $31$ & $33$ & $35$ & $36$ & $36$ &
      $36$ & $36$ & $36$\\
      \hline
      $1 + x + 4 x^2$ & $7$ & $22$ & $35$ & $42$ & $46$ & $48$ & $49$ & $49$ &
      $49$ & $49$ & $49$\\
      \hline
      $1 + x + 5 x^2$ & $9$ & $28$ & $45$ & $53$ & $58$ & $61$ & $61$ & $62$ &
      $62$ & $62$ & $62$\\
      \hline
      $1 + x + 6 x^2$ & $11$ & $34$ & $54$ & $65$ & $70$ & $73$ & $74$ & $75$
      & $75$ & $75$ & $75$\\
      \hline
    \end{tabular}
    \caption{\label{tbl:exponents}Minimal exponent $\lambda$ such that
    $p^{\lambda}$ has $m$-log-concave coefficients.}
  \end{table}
  
  We invite the reader to observe the various loose patterns suggested by the
  data contained in Table~\ref{tbl:exponents}.
\end{example}

\begin{acknowledgements}
We wish to thank Danylo Radchenko for interesting
and helpful comments. The second author would like to thank the
Max-Planck-Institute for Mathematics in Bonn, where part of this work was
completed, for providing wonderful working conditions.
\end{acknowledgements}

\end{document}